\newtheorem{theorem}{Theorem}[section]
\newtheorem{lemma}[theorem]{Lemma}
\newtheorem{remark}{Remark}
\newcommand{\ep}{\varepsilon}
\newcommand{\abs}[1]{\left|#1\right|}
\newcommand{\R}{{\mathbb {R}}}
\newcommand{\norm}[2]{\left\| #1 \right\|_{#2}}
\newcommand{\sshort}{S^{\text{short}}}
\newcommand{\slong}{S^{\text{long}}}
\begin{document}

\title{Global existence for the kinetic chemotaxis model without pointwise memory effects, and including internal variables
\footnote{The authors thank B. Perthame for fruitful discussions and challenging directions of research about these subjects. VC is grateful to University of Edinburgh for the kind hospitality during a one week visit.}
}

\date{December 15, 2007}

\maketitle

\centerline{\scshape Nikolaos Bournaveas }
\medskip
{\footnotesize
 \centerline{University of Edinburgh, School of Mathematics }
   \centerline{JCMB, King's Buildings, Edinburgh EH9 3JZ, UK }
   \centerline{\tt N.Bournaveas@ed.ac.uk}
}
\medskip

\centerline{\scshape Vincent Calvez }
\medskip
{\footnotesize
 \centerline{\'Ecole Normale Sup\'erieure, D\'epartement de Math\'ematiques et Applications }
   \centerline{45 rue d'Ulm,
F 75230, Paris, cedex 05, France}
\centerline{\tt Vincent.Calvez@ens.fr}
}
\bigskip

\begin{abstract}
This paper is concerned with the kinetic model of Othmer-Dunbar-Alt for bacterial motion. Following a previous work, we apply the dispersion and Strichartz estimates to prove global existence under several borderline growth assumptions on the turning kernel. In particular we study the kinetic model with internal variables taking into account the complex molecular network inside the cell.
\end{abstract}

\medskip

\noindent{Classification (AMS 2000) Primary: 92C17, 82C40; Secondary: 35Q80, 92B05}\\
\noindent{Keywords: kinetic model, bacterial motion, chemotaxis, dispersion estimates, Strichartz estimates, internal variables}

\medskip

\section{Introduction and results}

In biology, several key processes of cellular spatial organisation are driven by chemotaxis. The effective mechanism by which individual cells undergo directed motion varies among organisms. We are particularly interested here in bacterial migration, characterized by the smallness of the cells, and their ability to swim up to several orders of magnitude in the attractant concentration. Several models, depending on the level of description, have been developed mathematically for the collective motion of cells \cite{Perthame04,PerthameBook}. Among them the kinetic model due to Othmer, Dunbar and Alt (ODA) \cite{Alt, ODA}, describes a population of bacteria in motion ({\em e.g. E. Coli} or {\em B. Subtilis}) \cite{ErbanOthmer04} in a field of chemoattractant (a process called {\em chemokinesis}). These small cells are not capable of measuring any head-to-tail gradient of the chemical concentration, and to choose directly some preferred direction of motion towards high concentrated regions. Therefore they develop an indirect strategy to select favourable areas,  by detecting a sort of time derivative in the concentration along their pathways, and to react accordingly \cite{MK72}. In fact they undergo a jump process where free movements (runs) are punctuated by reorientation phenomena (tumbles) \cite{WWS03}. For instance it is known that {\em E. Coli} increases the time spent in running along a favourable direction \cite{MK72,BB74,ErbanOthmer04}.

This jump process can be described by two different informations. First cells switch the rotation of their flagella, from counter-clockwise CCW (free runs) to clockwise CW (reorientation, or tumbling phase), and conversely. This decision is the result of a complex chain of reactions inside the cells, driven by the external concentration of the chemoattractant \cite{GarrityOrdal,SPO,WWS03}. 
Then cells select a new direction. Although we expect large organisms (like { the slime mold amoebae} {\em D. discoideum}) to choose directly a favourable direction, bacteria are unable to do so, and they randomly choose a new direction of motion. Actually some { directional persistence} may influence this selection, privileging some angles better than others. 
However we will not consider inertia here for simplicity.

{ From the molecular point of view, the frequency of tumbling events is driven by a regulatory protein network made of the membrane receptor complex (MCP), the switch complex located at the flagella motor, and six main proteins in between (namely CheA, CheW, CheY, CheZ, CheB and CheR -- more are involved in {\em B. Subtilis}, but the whole picture is similar \cite{GarrityOrdal}). This regulatory network exhibits a remarkable excitation/adaptation process \cite{BB74,SBB86,SPO}. When attractant concentration increases suddenly the tumbling frequency decreases in a short time scale (excitation), but increases back to the basal activity after a while (adaptation). This allows bacteria to follow favorable pathways over several orders of the concentration magnitude. Note that a similar adaptation process is involved in bigger organisms like {\em D. discoideum} \cite{Hofer,OthmerSchaap}. Realistic models have been proposed based on the complete regulatory network \cite{HauriRoss,SPO}, as well as toy models capturing the key behavior (basically made of a two species relaxing ODE system \cite{ErbanOthmer07}). Note that this network is also known to select positive perturbations of the chemoattractant concentration only \cite{BB74}, and to be highly sensitive to very low changes in the chemoattractant concentration \cite{SBB86}.}

As a drift-diffusion limit of the ODA kinetic model, one recovers the so-called Keller-Segel model \cite{HillenOthmer,CMPS,CDMOSS}, where diffusion and chemosensitivity coefficients can be derived from the mesoscopic description. The Keller-Segel model exhibits a remarkable dichotomy where cells aggregate if they are sufficient enough, and disperse if not \cite{Horstmann}. Particularly in the two dimensional case, the total mass of cells is the key parameter which { selects between these phenomena {(respectively global existence {\em versus} blow-up in finite time)}. This simple alternative is depicted in the whole space $\R^2$ in \cite{BDP}.} In the three dimensional case however, the relevant quantity ensuring global existence is rather the $L^{3/2}$ norm of the initial cell density \cite{CoPeZa}. Therefore it is of interest to ask the question of global existence at the mesoscopic level. As far as we know, no blow-up phenomenon has been found in the ODA kinetic model. 

{ The goals of this paper are the  following two.}
First we investigate global existence theory for several kinetic models depending on the growth of the reorientation kernel with respect to the chemical. In a previous work we succesfully applied dispersion and Strichartz estimates to kinetic models including delocalization effects \cite{BCGP}, that can be either a time delay effect due to intracellular dynamics, or  some measurement at the tip of a cell protrusion. Those techniques are applied here to a class of assumptions where the reorientation kernel is actually independent of the (inner and outer) velocities. Those assumptions are very rough from the biological point of view, but they aim to determine the critical growth of the turning kernel ensuring global existence.
On the other hand we apply those ideas to a more realistic kinetic model including internal molecular variables, improving the results of \cite{ErbanHwang}. We present general assumptions for global existence that can be satisfied by the two species excitative/adptative ODE system, or more generally by a complex network.

We consider the following ODA kinetic model for bacterial chemotaxis: 
\begin{subequations}\label{kinmodel}
\begin{align}
\partial_{t} f + v\cdot\nabla_{x} f &= \int_{v'\in V} T[S](t,x,v,v') f(t,x,v') dv' \nonumber \\
&\qquad - \int_{v'\in V} T[S](t,x,v',v) f(t,x,v) dv' \ ,\quad t>0\ , x\in \R^d  \label{eq:ODA f}\\
- \Delta S + S &= \rho(t,x)=\int_{v\in V} f(t,x,v) dv\ , 
\end{align}
\end{subequations}
associated with the initial condition $f(0,x,v)= f_0(x,v)$. The space density of cells is denoted by $\rho(t,x)$. We assume in this paper that the space dimension is $d=2$ or $d=3$. We assume as usual that the set $V\in \R^d$ of { admissible} cell velocities is bounded.
{The free transport operator $\partial_t f + v\cdot\nabla_x f$ describes the free runs of the bacteria which have velocity $v$.} On the other hand, the scattering operator in the right hand side of \eqref{eq:ODA f} expresses the reorientation process (tumbling) occuring during the bacterial migration towards regions of high concentration in chemoattractant $S$.

\subsection*{Partial review of plausible reorientation mechanisms.}

We review below the assumptions existing in the literature concerning the reorientation kernel, in order to motivate the forthcoming work.

\subsubsection*{Delocalization effects.}

In a previous article \cite{BCGP}, we considered mild assumptions of the type:
\begin{equation}
\label{eq:turning kernel delay} 
0\leq T[S](t,x,v,v') \leq C \Big( 1 + S(t,x-\ep v') + |\nabla S|(t,x-\ep v') \Big) , \end{equation}
or,
\begin{equation}
\label{eq:turning kernel protrusion}
0\leq T[S](t,x,v,v') \leq C \Big( 1 + S(t,x+\ep v) + |\nabla S|(t,x+\ep v) + |D^2S|(t,x+\ep v) \Big). \end{equation}
Those assumptions were studied for example in  \cite{CMPS,HKS} { in two or three dimensions of space.}

Under assumption \eqref{eq:turning kernel delay}, the bacteria take the decision to reorient with the probability $\lambda[S] = \int T[S](t,x,v',v)\ dv'$, and then {choose} a new direction randomly. Therefore the turning frequency increases once cells have entered a favourable area, say (where some delay effect { due to internal dynamics} is expressed by the space shifting $ - \ep v'$; the concentration measurement is performed at position $x- \ep v'$ by the cell with velocity $v'$, turning at position $x$).
Intuitively, the cells increase the turning frequency to be confined in highly concentrated areas. 

The hypothesis \eqref{eq:turning kernel protrusion} is even more intuitive: cells, when they decide to turn (due to a complex averaging over the surrounding area within a radius $\simeq\ep$), simply choose a better new direction $v$ with higher probability. This anticipation measurement can be the result of sending protrusions { in the surrounding}, or considering that the cells have some finite radius with receptors located all over the membrane (see also \cite{HPS} for a similar interpretation at the parabolic level -- volume effects have also been considered at the kinetic level in \cite{ChalubRodrigues}). However this interpretation is hardly relevant for bacteria which are small cells, unable to feel gradients and to send protrusions. 

\begin{remark}
The gradient in assumption \eqref{eq:turning kernel delay} has to be motivated because we highlight that bacteria cannot feel gradients of chemical concentration. As a matter of fact, from a homogeneity viewpoint, $\nabla S$ has the same weight as the time derivative $\partial_t S$. Therefore we can replace indeed \eqref{eq:turning kernel delay} by the assumption
\[ 0\leq T[S](t,x,v,v') \leq C \Big( 1 + S(t,x-\ep v') + |\partial_t S|(t,x-\ep v') \Big) \ , \]
which makes sense biologically (although a more realistic assumption is expressed for example in \cite{DolakSchmeiser}, see below \eqref{eq:directional derivative}).
To see that $\nabla S$ and $\partial_t S$ do have the same homogeneity, observe that
\[ \partial_t S  = G* \partial_t\rho = - G*\nabla\cdot j \ ,   \]
where $G$ is the Bessel potential, and the flux $j$ is given by 
\[ j(t,x) = \int_V v f(t,x,v)\ dv \  , \quad |j(t,x) | \leq (\max_{v\in V} |v|) \rho(t,x)\ .  \]
As a consequence, { in the three dimensional case we have} 
\[ |\partial_t S| \simeq \left| \frac1{|x|}*(\nabla \cdot j) \right|\simeq \frac1{|x|^2}* |j| \lesssim \frac1{|x|^2}* \rho \simeq |\nabla S|\ . \]
\end{remark}

The dispersion lemma turned out to be  a powerful tool for dealing with  those assumptions (even the second derivative of $S$ can be 
added in \eqref{eq:turning kernel protrusion}). 
It turns out that putting together  those two hypotheses \eqref{eq:turning kernel delay} and \eqref{eq:turning kernel protrusion} is a much harder task (due to the fact that we loose the benefit of the decay term in the balance law { along the estimates}). 
{ Some progress} in this direction was recently made in \cite{CMPS, HKS, BCGP} but the whole picture is not clear so far. For example in \cite{BCGP} it was shown that in $d=3$  dimensions we have global existence of weak solutions if
\begin{equation}\label{1}
0 \leq T[S](t,x,v,v') \lesssim  1 +  S(t,x+\ep v) + S(t,x-\ep v') + \abs{\nabla S(t,x+\ep v)} + \abs{\nabla S(t,x-\ep v')}, 
\end{equation}
provided that the initial data are small in the critical space $L^{3/2}$. If \eqref{1} is strengthened by dropping the last term,
then a global existence result was established without a smallness assumption on the initial data.  The proofs use the 
dispersion and Strichartz estimates of \cite{CP} and rely on the delocalization effects induced by $x+\ep v$ and $x-\ep v'$.

Interestingly the fact that some directed motion emerges from turning kernels which resemble to assumptions \eqref{eq:turning kernel delay}, \eqref{eq:turning kernel protrusion} or \eqref{1} -- as pointed out by the diffusion limit -- seems to involve a completely different  
mechanism from the following commonly described behaviour in {\em E. coli}.

\subsubsection*{Persistence of motion in the good directions.}
As opposed to the previous set of hypothesis, it is commonly accepted  that bacteria increase the time spent in running in a favourable direction \cite{WWS03,ErbanOthmer04}. That is, the turning kernel is expected to decrease as the chemical concentration increases along the cell's trajectory, like  \begin{equation} T[S](v,v') = T_0 + \psi(S_t + v'\cdot \nabla S)\ , \label{eq:directional derivative}\end{equation} where $\psi$ is nonnegative and decreasing, and $S_t + v'\cdot \nabla S$ denotes the directional derivative along the free run before turning (see \cite{ErbanHwang}, \cite{DolakSchmeiser} { where this hypothesis is injected in a model for {\em D. discoideum} self-organization, and its drift-diffusion limit is derived}). One may think of $\psi$ to be: $\psi(\eta) = 0$ if $\eta>0$ and $\psi(\eta) = 1$ if $\eta<0$ for instance. { Actually, in \cite{ErbanHwang} the authors explicit two behaviour caricatures, where cells might "perfectly avoid going in wrong directions", or "perfectly follow good directions". The latter is stressed out there and leads the system to regular solutions, intuitively, whereas the former might develop singularities where cells aggregate.}

{ The above mechanism is also part of more complex models including internal variables (which is reviewed and analysed further below). In fact some molecular concentration denoted by $y$ (standing for the phosphorylated CheY-P) which induces a tumbling behavior, is actually reduced under attractant binding to the membrane receptor (excitation phase). The chemical chain of reactions is in fact inhibited under activation of the membrane complex receptor. On the contrary, expression of a repellent activates this internal network, favouring tumbling.} Global existence theory for such a class of models has been discussed in \cite{ErbanHwang} for the one dimensional case.

\subsubsection*{Internal dynamics}
Complex models of bacterial motility include a cascade of chemical reactions. This chain of activator/inhibitor reactions links the evaluation of the chemical concentration by the membrane receptors to the rotational switch of the flagella, inducing { or inhibiting} the tumbling phase. Several works propose a chemical network describing this complexity \cite{HauriRoss,SPO}. In particular, the global short term excitation/mid term adaptation is crucial for the cells to crawl up across levels of magnitude of the chemical concentration. Caricatures of such an excitation/adaptation process are depicted in \cite{ErbanOthmer04,DolakSchmeiser} for instance. However we will keep in this paper  the necessary abstract level required for our purpose ({ for an illustrative example, see section \ref{sec:internal}}).

In the following, $y\in \R^m$ denotes the whole internal state of the cells, which can correspond to huge data of { molecular} concentrations in the chemical network (in fact $m=2$ in the caricatural excitating/adaptating system).  
In accordance with previous notations, $p(t,x,v,y)$ denotes the cell density at position $x$, velocity $v$, and with internal state $y$. As before, $f(t,x,v) = \int_y p(t,x,v,y)\ dy$ is the cell density in position$\times$velocity space. On the other hand we introduce $\mu(t,x,y) = \int_v p(t,x,v,y) \ dv$, and as usual $\rho(t,x) = \int_{v,y} p(t,x,v,y) \ dvdy$.
The chemical potential is given by a mean-field equation $-\Delta S + S = \rho(t,x)$. But this could be extended to a more realistic influence of the internal state on the chemical secretion (as it is in \cite{DolakSchmeiser})
\[- \Delta S + S = \int_y \omega(y) \mu(t,x,y)\ dy \ ,\] under suitable assumptions on the weight $\omega$.
The dynamic inside an individual cell is driven by an ODE system representing the protein network in an abstract way:
\[ \frac{dy}{dt} = G\big(y,S(t,x)\big)\ , \quad y\in \R^m\ . \]
The cell master equation describing the run and tumble processes, and the chemical potential equation are respectively:
\begin{subequations}\label{kinmodel_y intro}
\begin{align}
\partial_{t} p + v\cdot\nabla_{x} p + \nabla_y \cdot \Big( G(y,S) p \Big) &=   \int_{ v'\in V} T(t,x,v,v',y) p(t,x,v',y) dv' \nonumber\\
   &\qquad  -  \int_{ v'\in V} T(t,x,v',v,y) p(t,x,v,y) dv' \  ,\\
 -\Delta S + S & = \rho \ ,   
\end{align}
\end{subequations}

The turning kernel $T$ can be decomposed in this context as product between a turning frequency $\lambda[y]$, depending on the internal state only, and a reorientation $K(v,v')$ which may describe some persistence in the choice of a new direction with respect to the old one. Without loss of generality here we assume that $K(v,v')$ is constant and renormalized as being $K(v,v') = 1/|V|$.

It is worth noticing that this realistic kinetic model may contain enormous informations on the microscopic cell biology, and links different scales of description, because we eventually end up with a cell population $\rho(t,x)$. 

\medskip

As a partial conclusion, we observe that several scenarios with different underlying kinds of hypotheses, drive the system to positive chemotaxis (at least considering the formal drift-diffusion limit of those).

\subsection*{Statement of the main results}

In this paper we investigate the critical growth of the turning kernel in terms of space norms of the chemical which ensures the global existence for the kinetic model. { In particular 
we consider control on the turning kernel without any dependence upon the velocity variables, that is with some abuse of notations:
\[ 0\leq T[S](t,x,v,v') \leq T[S](t,x)\ , \] under suitable conditions on the growth of $T[S]$.}
We exhibit examples in 2D and 3D, restricting ourselves to some $L^p$ norms of the chemical (and not of its gradient for instance) for which our method appears to be borderline.
In particular, it is  natural to ask (see Section 3 of \cite{CMPS} and the concluding remarks in \cite{BCGP}) 
whether global existence can  be established under a hypothesis of the form 
\begin{equation}\label{hypothesisA}
 0 \leq T[S](t,x,v,v') \leq C\Big( 1+ \norm{S(t,\cdot)}{L^{\infty}(\R^d)}^{\alpha}\Big)\ ,
\end{equation}
 where $\alpha>0$. 

\subsubsection*{Exponential growth in dimension 2}

Consider first the case of dimension $d=2$. It is easy to see using the methods of \cite{CMPS} that we have global existence
for any exponent $\alpha>0$ within \eqref{hypothesisA}.  In analogy with global existence results for nonlinear wave or Schr\"odinger equations 
\cite{IMM1, IMM2, NO1, NO2} we can ask whether the turning kernel can grow exponentially: 
\begin{equation}\label{hypothesisB}
 0 \leq T[S](t,x,v,v') \leq C\left( 1+ \exp\left[\norm{S(t,\cdot)}{L^{\infty}(\R^2)}^{\beta}\right] \right) .
\end{equation}
We will show that this is actually possible: if $0<\beta<1$ then we have global existence for large data; if $\beta=1$ 
we have global existence for initial data of small mass. Our proof requires $M<\pi$, but we don't know if this bound is optimal. 
Also, we  don't know if we { may} have blow-up for large $M$ or for exponents $\beta>1$.

 We shall prove the following 
\begin{theorem} \label{exp3C} 
Consider the system \eqref{kinmodel} in $d=2$ dimensions under hypothesis \eqref{hypothesisB} and let $1<p<2$.
Assume $0 <\beta \leq 1$.  
If $\beta =1 $ assume also that  $M<\pi$, where $M=\int_{V} f_0(x,v) dv$ is the { total mass of cells.}  
Then if $f_0\in L^{1}_{x}L^{p}_{v} \cap L^{1}_{x,v}$ then 
\eqref{kinmodel} 
has a  global weak solution $f$ with $f(t)\in L^{p}_{x}L^{1}_{v} \cap L^{1}_{x,v}$.

\end{theorem}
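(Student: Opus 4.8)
I would obtain the solution by a regularisation/compactness scheme, the real content being a uniform a priori bound deduced from the Duhamel formula together with the kinetic dispersion estimate. First replace $T$ by the truncated kernel $T_n[S]=\min(T[S],n)$ (mollifying $f_0$ if convenient). Since $T_n$ is bounded, the system \eqref{kinmodel} with $T_n$ has a global nonnegative solution $f_n$ by a standard Duhamel fixed point argument, with conserved mass $\norm{f_n(t)}{L^1_{x,v}}=M$. As $f_n\ge0$ one has $\norm{f_n(t)}{L^p_xL^1_v}=\norm{\rho_n(t)}{L^p_x}$, so everything reduces to bounding $\rho_n$ in $L^\infty_{\mathrm{loc}}((0,\infty);L^p_x)$ uniformly in $n$.

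To get such a bound, use the mild formulation along the free transport flow. The loss term is nonnegative, so $\partial_t f_n+v\cdot\nabla_x f_n\le\int_V T_n[S_n]f_n\,dv'$, and by hypothesis \eqref{hypothesisB} the gain is at most $h_n(s)\,\rho_n(s,x)$, where $h_n(s)=C\big(1+\exp[\,\norm{S_n(s)}{L^\infty(\R^2)}^{\beta}\,]\big)$ depends on time only. Duhamel and integration in $v$ give
\[
\rho_n(t,x)\ \le\ \int_V f_0(x-tv,v)\,dv\ +\ \int_0^t h_n(s)\Big(\int_V \rho_n\big(s,x-(t-s)v\big)\,dv\Big)\,ds.
\]
Taking $\norm{\cdot}{L^p_x}$: the free term is controlled by the dispersion estimate $\norm{\int_V f_0(\cdot-tv,v)\,dv}{L^p_x}\le C_V\,t^{-2(1-1/p)}\norm{f_0}{L^1_xL^p_v}$, and this is exactly where $1<p<2$ enters, since then $2(1-1/p)\in(0,1)$ is an integrable singularity at $t=0$; the nonlinear term is a convolution of $\rho_n(s)$ with the kernel $\tau^{-2}\mathbbm{1}_{\tau V}$ ($\tau=t-s$), of $L^1$-norm $|V|$, so Young's inequality gives it both as $\le |V|\,\norm{\rho_n(s)}{L^p_x}$ and, using mass conservation, as $\le C_V M\,(t-s)^{-2(1-1/p)}$, and one keeps whichever is cheaper.

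The crux — and the step I expect to be the main obstacle — is controlling the turning rate $h_n(s)$, i.e. $\norm{S_n(s)}{L^\infty}$. From $-\Delta S_n+S_n=\rho_n$ we have $S_n=G*\rho_n$ with $G$ the two-dimensional Bessel kernel, and the decisive fact is that $\norm{G}{L^{r}(\R^2)}$ grows only \emph{linearly} in $r$ as $r\to\infty$ (indeed $\sim r/(4\pi e)$), because of the logarithmic singularity of $G$. Interpolating $\rho_n$ between $L^1$ (mass $M$) and $L^p$, applying Young with exponent $r'$ and optimising over $r'$ then produces a Trudinger--Moser-type logarithmic bound of the shape $\norm{S_n(s)}{L^\infty}\lesssim \tfrac{M}{4\pi(1-1/p)}\log^{+}\!\big(\norm{\rho_n(s)}{L^p_x}/M\big)+C\,M$, which, combined with the dispersive estimate $\norm{\rho_n(s)}{L^p_x}\lesssim s^{-2(1-1/p)}\norm{f_0}{L^1_xL^p_v}$, gives in particular $\norm{S_n(s)}{L^\infty}\lesssim \tfrac{M}{2\pi}\log(1/s)+C$ for small $s$. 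Hence, when $\beta=1$, the turning rate is only of \emph{polynomial} size both near $s=0$ and in $\norm{\rho_n(s)}{L^p_x}$, with exponents proportional to $M/2\pi$, so the integral inequality for $\norm{\rho_n(t)}{L^p_x}$ becomes a borderline Volterra-type inequality that closes, by a bootstrap/continuation argument, provided the relevant exponent stays below $1$; the hypothesis $M<\pi$ secures this (with room, consistent with the authors' remark that $\pi$ need not be sharp). For $0<\beta<1$ the exponential is strictly subcritical: writing $\eta^{\beta}\le\varepsilon\eta+C_\varepsilon$ replaces $\exp[\norm{S_n(s)}{L^\infty}^{\beta}]$ by $C_\varepsilon\exp[\varepsilon\norm{S_n(s)}{L^\infty}]$, whose contribution carries exponents proportional to $\varepsilon M/2\pi$ with $\varepsilon$ arbitrarily small, so the Volterra inequality closes for \emph{any} mass, giving global existence for large data.

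Finally, equipped with the uniform bounds $\sup_n\big(\norm{f_n}{L^\infty_tL^1_{x,v}}+\norm{f_n}{L^\infty_{\mathrm{loc}}((0,\infty);L^p_xL^1_v)}\big)<\infty$, I would pass to the limit $n\to\infty$: a velocity-averaging lemma gives strong compactness of $(\rho_n)$, hence of $S_n=G*\rho_n$ locally in $L^\infty$, which suffices to identify the limits of the gain and loss terms and to conclude that the limit $f$ is a global weak solution of \eqref{kinmodel} with $f(t)\in L^p_xL^1_v\cap L^1_{x,v}$ for $t>0$.
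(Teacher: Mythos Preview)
Your plan is correct and follows the same overall architecture as the paper (Duhamel formula, dispersion in $L^p_xL^1_v$, reduction to a Volterra inequality for $\|\rho(t)\|_{L^p}$, and a compactness limit). The difference lies in the key step, namely how $\exp[\|S(t)\|_\infty^\beta]$ is controlled. The paper expands the exponential as a power series and, crucially, uses a \emph{different} Young exponent $L^{\mu j}$ for the $j$-th term, so that each term carries the fixed factor $\|\rho\|_{L^p}^\beta$ and the summation is handled by the root test (the mass condition is precisely the convergence criterion). You instead optimise the Young exponent once to obtain the Trudinger--Moser type bound $\|S\|_\infty\le \tfrac{Mp'}{4\pi}\log^{+}(\|\rho\|_{L^p}/M)+CM$, exponentiate to $T[S]\le C(1+\|\rho\|_{L^p}^{\gamma})$ with $\gamma=Mp'/(4\pi)$, and close under $\gamma<1$. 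Both arguments rest on the same growth $\|G\|_{L^r}\sim r/(4\pi e)$; the series packaging gives the clean exponent $\beta$ in the final inequality, while your optimisation is more conceptual and, with the sharp constant you quote, actually yields the better threshold $M<2\pi$ (the paper loses a factor $2$ when bounding $\int_0^\infty u^r e^{-2u}\,du$).

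One point to tighten: the line ``combined with the dispersive estimate $\|\rho_n(s)\|_{L^p_x}\lesssim s^{-2(1-1/p)}\|f_0\|_{L^1_xL^p_v}$'' reads as an a priori bound on the full solution, but it is only the free contribution, so as written the step is circular. You can bypass this entirely: once you have $h_n(s)\le C(1+\|\rho_n(s)\|_{L^p}^{\gamma})$ with $\gamma\le 1$, plug into the dispersion inequality (your bound~(b)) to obtain a \emph{linear} Volterra inequality with locally integrable kernel $(t-s)^{-\lambda}$, $\lambda=2/p'<1$, and conclude by Gronwall---no separate short-time continuation is needed. For $\beta<1$ your reduction via $\eta^\beta\le\varepsilon\eta+C_\varepsilon$ is fine and matches in spirit the paper's observation that the root-test limit is $0$ in that case.
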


\subsubsection*{Almost $L^\infty$ growth in 3D}

Naturally, from the global existence point of view we address the question of a $\|S\|_\infty$ growth of the turning kernel in the case of $d=3$ dimensions: $T[S]\leq C\big( 1+ \|S\|_\infty\big)$. Actually it cannot be handled using our method in three dimensions so far. Even in the simpler case $T[S] \leq C\big(1+ S(t,x)\big)$ our dispersion method fails.   
Puzzling enough, if $T[S]= C\big(1 + S(t,x)\big)$ then a very simple symmetrization trick does perfectly the job (see section \ref{sec:dispersion}). 

It was noticed
in \cite{CMPS} that if $\alpha<1$ in \eqref{hypothesisA} then we have global existence (a sketch of the proof will be given 
in Section \ref{sublinear}). The case $\alpha=1$ remains open.
In this direction we will use  the methods of \cite{BCGP} to show that we have global existence under the assumption
\begin{equation}\label{lra2}
 0\leq  T[S](t,x,v,v') \leq C\left( 1+ \norm{S(t,\cdot)}{L^{r}(\R^3)}^{\alpha}\right)\ ,
\end{equation}
where  $0<\alpha < \frac{r}{r-3}$ and  $r$ can be arbitrarily large. Notice that   $\frac{r}{r-3} \to 1^{+}$
as $r\to\infty$, { which is coherent with the above obstruction}. More precisely we shall prove the following 

\begin{theorem} \label{lra1}
Let $d= 3$ and $1<p<3/2$. Suppose that that the turning kernel $T[S]$ satisfies hypothesis \eqref{lra2} for some $r$ and $\alpha$  { verifying:}   
if $1\leq r \leq 3$,  $\alpha$ can be any positive number, whereas in case of $3< r <\infty$,  
$0<\alpha < \frac{r}{r-3}$. 
Then if $f_0\in L^{1}_{x} L^{p}_{v}\cap L^{1}_{x,v}$ then the kinetic model \eqref{kinmodel}
has a global weak solution with $f(t)\in L^{p}_{x}L^{1}_{v}\cap L^{1}_{x,v}$.
\end{theorem}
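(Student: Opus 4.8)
The plan is to establish a global-in-time a priori bound on $\norm{\rho}{L^{q}([0,T];L^{p}(\R^3))}$ and then recover the weak solution by a routine regularisation/compactness scheme; the decisive simplification is that, under \eqref{lra2}, $T[S]$ depends on the space variable only through the scalar $\norm{S(t,\cdot)}{L^{r}(\R^3)}$, which decouples the spatial structure of the nonlinearity from that of the free transport. First I would mollify the data, truncate the kernel, and work with the resulting smooth solutions, retaining only bounds uniform in the regularisation. Integrating the master equation in $v$ gives mass conservation $\norm{\rho(t)}{L^{1}(\R^3)}=M$; dropping the nonnegative loss term and using $0\le T[S](t,x,v,v')\le T[S](t,x)$ together with $\int_{V}f\,dv=\rho$ yields the Duhamel inequality
\[
\rho(t,x)\ \le\ \rho^{\mathrm{free}}_{0}(t,x)\ +\ \int_{0}^{t}\Lambda(s)\,\big(\mathcal D_{t-s}\rho(s)\big)(x)\,ds,\qquad \Lambda(s):=C\big(1+\norm{S(s)}{L^{r}(\R^3)}^{\alpha}\big),
\]
where $\rho^{\mathrm{free}}_{0}(t,x)=\int_{V}f_{0}(x-tv,v)\,dv$ and $(\mathcal D_{\tau}g)(x)=\int_{V}g(x-\tau v)\,dv=\tau^{-3}(g*\mathbf{1}_{\tau V})(x)$.

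The two inputs driving the estimate are the dispersion bound for $\mathcal D_{\tau}$ and the Bessel-potential bound for $S$. Young's inequality and interpolation give $\norm{\mathcal D_{\tau}g}{L^{p}}\le C_{V}\,\tau^{-(3-\frac3p)(1-\theta)}M^{1-\theta}\norm{g}{L^{p}}^{\theta}$ whenever $\norm{g}{L^{1}}=M$, for any $\theta\in[0,1]$, and likewise $\norm{\rho^{\mathrm{free}}_{0}(t)}{L^{p}}\le C_{V}\,t^{-(3-\frac3p)}\norm{f_{0}}{L^{1}_{x}L^{p}_{v}}$; the assumption $1<p<3/2$ is exactly what makes $3-\frac3p<1$, so these dispersive weights are locally integrable in time. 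On the other side, writing $S=G*\rho$ with $G$ the Bessel kernel and recalling $G\in L^{a}(\R^3)$ precisely for $a<3$, Young gives $\norm{S(s)}{L^{r}}\le\norm{G}{L^{a}}\norm{\rho(s)}{L^{b}}$ with $\tfrac1a+\tfrac1b=1+\tfrac1r$: when $1\le r\le 3$ one may take $b$ arbitrarily close to $1$, whereas for $3<r<\infty$ the constraint $a<3$ forces $b\ge\tfrac{3r}{2r+3}$. Interpolating $\norm{\rho(s)}{L^{b}}\le M^{1-\theta_{2}}\norm{\rho(s)}{L^{p}}^{\theta_{2}}$ between the conserved mass and the $L^{p}$ norm (for $p$ chosen close enough to $3/2$ when $r$ is large, so that $1\le b\le p$), one obtains $\Lambda(s)\lesssim 1+\norm{\rho(s)}{L^{p}}^{\alpha\theta_{2}}$, with $\theta_{2}=0$ available for $r\le 3$ and $\theta_{2}$ bounded below by $\tfrac{r-3}{3r}\cdot\tfrac{p}{p-1}$ for $r>3$.

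Feeding these bounds into the Duhamel inequality and taking an $L^{q}$ norm in time via Hölder and Hardy--Littlewood--Sobolev in $s$, I expect an inequality of the shape $N(T)\le C\,T^{\gamma}\norm{f_{0}}{L^{1}_{x}L^{p}_{v}}+C\,T^{\delta}N(T)^{\mu}$ for $N(T)=\norm{\rho}{L^{q}([0,T];L^{p}(\R^3))}$, with $\mu=\alpha\theta_{2}+\theta_{1}$ and $\gamma,\delta$ explicit in $p,q,r,\theta_{1}$. The free parameters $q$ and $\theta_{1}$ can be tuned so that simultaneously $\gamma>0$ and $\delta>0$, which a short computation shows to be possible exactly when $\alpha\theta_{2}<\tfrac{p}{3(p-1)}$, that is when $\alpha<\tfrac{r}{r-3}$ (and for any $\alpha$ once $\theta_{2}$ may be taken $0$, i.e. for $r\le 3$); this is precisely the hypothesis, and the fact that $\tfrac{r}{r-3}\to 1^{+}$ as $r\to\infty$ matches the obstruction to a $\norm{S}{L^\infty}$-type growth noted before the statement. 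With $\gamma,\delta>0$ the inequality yields a quantitative local bound; then, since $\norm{f(t)}{L^{1}_{x}L^{p}_{v}}\le\norm{f_{0}}{L^{1}_{x}L^{p}_{v}}+CM\int_{0}^{t}\Lambda$ stays finite as long as $N$ does, one restarts from positive times (where $\rho(t)\in L^{p}$) and iterates to reach every $T<\infty$, with no smallness on $f_{0}$. This gives the global control of $\norm{\rho}{L^{q}_{t}L^{p}_{x}}$, hence of $\norm{f(t)}{L^{p}_{x}L^{1}_{v}}=\norm{\rho(t)}{L^{p}_{x}}$, after which weak compactness and velocity averaging let one pass to the limit in the regularisation and obtain the solution claimed.

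I expect the genuine difficulty to be this last balancing: verifying that the constraints from integrability of $G$ ($a<3$), the interpolation window $1\le b\le p$ for $\rho$, local time-integrability of the dispersive weight ($p<3/2$), and the Hardy--Littlewood--Sobolev exponent condition on $q$ can all be met at once while leaving a strictly positive power of $T$, and that this is possible exactly under $\alpha<\tfrac{r}{r-3}$; organising the continuation argument for large data, rather than invoking a smallness hypothesis, is the accompanying technical point.
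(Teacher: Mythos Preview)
Your strategy is sound and rests on the same two pillars as the paper---the dispersion estimate for free transport and an elliptic bound for $\|S\|_{L^r}$ in terms of $\|\rho\|_{L^p}$---but you take a longer path to the conclusion. The paper's argument is considerably more direct: for $r>3$ it simply \emph{chooses} $p$ by $\tfrac{1}{p'}=\tfrac{\alpha(r-3)}{3r}<\tfrac{1}{3}$ and uses the Hardy--Littlewood--Sobolev inequality $\|S\|_{L^r}\le C\|\rho\|_{L^B}$ (with $\tfrac{1}{B'}=\tfrac{1}{3}-\tfrac{1}{r}$), followed by interpolation $\|\rho\|_{L^B}\le CM^{1-p'/B'}\|\rho\|_{L^p}^{p'/B'}$, so that $T[S]\le C(1+\|\rho\|_{L^p})$ with exponent exactly one. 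Since under \eqref{lra2} $T[S]$ is a function of $t$ alone, Lemma~\ref{lem:dispersion} then yields the \emph{linear} Volterra inequality
\[
\|\rho(t)\|_{L^p}\ \le\ t^{-3/p'}\|f_0\|_{L^1_xL^p_v}\ +\ C\,M\int_0^t(t-s)^{-3/p'}\big(1+\|\rho(s)\|_{L^p}\big)\,ds,
\]
and a singular Gronwall closes the estimate globally. In your notation this corresponds to fixing $\theta_1=0$ (no interpolation on the dispersive side) and placing $\theta_2$ at the HLS endpoint so that $\alpha\theta_2=1$; the $L^q_t$ norm, the HLS-in-time step, and the continuation argument then become unnecessary. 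Your more flexible scheme would also close, but the continuation you sketch for the case $\mu=\alpha\theta_2+\theta_1>1$ is the delicate point: a superlinear bound $N(T)\le A+CT^\delta N(T)^\mu$ does not by itself prevent the restarting times from accumulating, and you would have to show that the restarted ``initial'' norm $\|f(t_0)\|_{L^1_xL^p_v}$ does not grow too fast under iteration. The paper avoids this entirely by making the inequality linear from the outset; what your extra parameters buy is only the freedom not to pin down $p$ in advance, at the cost of a harder closing argument.
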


If we assume that the turning kernel satisfies \eqref{hypothesisA} with $\alpha =1$,
we can use the Strichartz estimates of \cite{CP} to show global existence, provided that the critical norm 
$\norm{f_0}{L^{3/2}\left(\R^{6}_{x,v}\right)}$ is small.

\begin{theorem} \label{3dsmall}
Let $d=3$ and assume that the turning kernel satisfies
\[ 0 \leq T[S](t,x,v,v') \leq C\left( 1+ \norm{S(t,\cdot)}{L^{\infty}(\R^3)}\right)\ .\]
 Assume also that 
$f_0 \in L^{1}\left(\R^{6}_{x,v}\right) \cap
L^{3/2}\left(\R^{6}_{x,v}\right)$
and that $\norm{f_0}{L^{3/2}\left(\R^{6}_{x,v}\right)}$ is sufficiently
small. Then \eqref{kinmodel} has a global weak solution. 
\end{theorem}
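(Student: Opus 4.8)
The plan is to treat Theorem~\ref{3dsmall} as a \emph{critical} perturbation problem and to close a single a priori estimate by means of the kinetic Strichartz estimates of \cite{CP}, following the approximation scheme of \cite{BCGP}. Concretely, I would smooth out the turning kernel and the initial datum, solve the regularized system by a contraction argument on a short time interval and continue it by the a priori bound below, and then pass to the limit using the classical velocity--averaging compactness lemmas exactly as in \cite{BCGP}; the only genuinely new point is the a priori bound itself, which must be uniform in the regularization. Throughout one uses that $f(t,\cdot,\cdot)\geq 0$ for nonnegative data (maximum principle), and that the mass $M=\int f(t)\,dx\,dv$ is conserved, because the gain and loss terms in \eqref{eq:ODA f} have the same integral in $v$.

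Since $f\geq 0$ and $T[S]$ is $(v,v')$--independent in the bound, the right--hand side of \eqref{eq:ODA f} is controlled pointwise by
\[
\Big|\int_V T[S]\,f'\,dv' - \int_V T[S]\,f\,dv'\Big|\;\leq\; C\big(1+\|S(t,\cdot)\|_{L^\infty(\R^3)}\big)\big(\rho(t,x)+|V|\,f(t,x,v)\big).
\]
To control $\|S\|_\infty$, write $S=\mathcal G\ast\rho$ with $\mathcal G$ the kernel of $(-\Delta+1)^{-1}$ on $\R^3$; since $\mathcal G\geq 0$, $\mathcal G(x)\simeq|x|^{-1}$ near the origin and $\mathcal G$ decays exponentially, we have $\mathcal G\in L^q(\R^3)$ for every $1\leq q<3$, but \emph{not} for $q=3$. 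Splitting $\mathcal G$ into its singular and its decaying parts and applying Young's inequality together with $\|\rho(t)\|_{L^1}=M$ gives
\[
\|S(t,\cdot)\|_{L^\infty(\R^3)}\;\lesssim\; M+\|\rho(t,\cdot)\|_{L^{q}(\R^3)}\qquad\text{for any fixed }q>3/2 .
\]
It is precisely the failure of this bound at $q=3/2$ that prevents the naive symmetrization/energy estimate from closing (it would require controlling $\|S\|_\infty$ by the critical norm $\|\rho\|_{L^{3/2}}$ compatible with $\|f\|_{L^{3/2}_{x,v}}$), which is why one must bring in the dispersive gain.

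I would then invoke the Strichartz estimates of \cite{CP}: in $d=3$ they upgrade $f_0\in L^{3/2}(\R^{6}_{x,v})$ into space--time integrability of $\rho$, and — using also $f_0\in L^1$, interpolated against the conserved mass $\rho\in L^\infty_tL^1_x$ — one obtains in fact a whole scale of admissible space--time norms of $\rho$; their inhomogeneous form bounds these same norms by the source measured in a dual norm carrying a time integration. Inserting the pointwise source bound together with $\|S\|_\infty\lesssim M+\|\rho\|_{L^q_x}$ and using H\"older in $t$ and $x$, the contribution of the $\|S\|_\infty$--part is estimated by a constant times the \emph{square} of the relevant Strichartz norm $X$ of $\rho$, while the contribution of the constant ``$1$'' and of the loss term $|V|f$ is a lower--order term handled on finite time intervals by a Gronwall argument together with the $L^\infty_tL^{3/2}_{x,v}$ bound propagated directly from the transport equation. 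One is thus led to an inequality of the shape $X\lesssim\|f_0\|_{L^{3/2}(\R^6)}+(\text{l.o.t.})+C\,X^{2}$; under the smallness hypothesis on $\|f_0\|_{L^{3/2}(\R^6)}$ this closes by a continuity (bootstrap) argument, precluding finite--time blow--up and hence yielding a global weak solution after passing to the limit in the regularization.

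The main obstacle I anticipate is the \emph{criticality} of every estimate involved: the Bessel kernel sits exactly on the borderline $L^{3/2}(\R^3)$, the Strichartz pair needed is the borderline one attached to $L^{3/2}(\R^{6}_{x,v})$, and the turning term is exactly quadratic--critical — so there is no margin, the smallness of $\|f_0\|_{L^{3/2}}$ is indispensable, and the regularization must be chosen so as not to spoil the sharp constants. The most delicate technical point within this is the treatment of the non--dispersive pieces of the source — the constant part of $T[S]$ and the $f$--dependence through the loss term — which gain no integrability from the free flow, since free transport preserves $\|f(t)\|_{L^p_{x,v}}$ for each fixed $v$; these have to be absorbed using only the conserved mass and short--time factors, so that they do not overwhelm the critical quadratic term that smallness has rendered subordinate.
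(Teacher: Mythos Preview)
Your approach is essentially the paper's: Strichartz estimates from \cite{CP}, $\|S\|_\infty$ controlled by an $L^p$ norm of $\rho$ with $p>3/2$, and a small-data bootstrap on a superlinear inequality. Two simplifications you miss, however. First, since $f\geq 0$ the loss term carries the good sign and is simply dropped, so the Duhamel source is just $C(1+\|S(t)\|_\infty)\rho(t,x)$ --- there is no $|V|f$ piece to nurse, and no need to ``propagate $L^\infty_tL^{3/2}_{x,v}$ directly from the transport equation'' (which in any case does not follow, as the right-hand side is nonzero). Second, the paper does not mix Strichartz with finite-interval Gronwall but works with a single global norm $X=\|f\|_{L^q_tL^p_xL^r_v}$ and solves an explicit algebraic system for the exponents: the admissibility conditions of \cite{CP} with $a=2pr/(p+r)=3/2$, the requirement $p>3/2$ so that $\|S(t)\|_\infty\leq C\|\rho(t)\|_{L^p}$, and the closing constraint $q'(1+p'/r')=q$, yielding $q=1+\sqrt{2}$, $p=(9+3\sqrt{2})/7$, $r=3(\sqrt{2}-1)$. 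The outcome is $X\leq \|f_0\|_{L^{3/2}}+CX^{\sqrt{2}}$ rather than $X^2$ (an exponent of exactly $2$ would force $p=r$ and hence $q=\infty$, which is incompatible), but any power $>1$ suffices for the bootstrap. The constant ``$1$'' in the bound on $T[S]$ is, as you suspected, a nuisance; the paper simply omits it with a remark, so your worry there is legitimate but orthogonal to the main mechanism.
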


\subsubsection*{Internal dynamics.}


We shall prove the following theorem for global existence in three dimensions of space.
\begin{theorem} 
\label{the:internal}
Let $d=3$.
Assume that the turning kernel has the form $T = \lambda[y]\times K(v,v')$ where $K$ is uniformly bounded, and $\lambda$ grows at most linearly: $\lambda[y]\leq C\big( 1+ |y|\big)$. On the other hand, assume that $G$ has a (sub)critical growth with respect to $y$ and $S$: there exists $0\leq \alpha<1$ such that
\[ |G|(y,S)\leq C\Big( 1 + |y| + S^\alpha\Big)\ . \]
Then there exists an exponent $1<p<3/2$ such that the system \eqref{kinmodel_y intro} admits globally existing solutions with $p\in L^p_xL^1_vL^1_y$.
\end{theorem}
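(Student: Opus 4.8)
The plan is to mimic the dispersion/Strichartz scheme used for Theorems \ref{lra1} and \ref{3dsmall}, but now carried out in the extended phase space $(x,v,y)$, treating the internal transport term $\nabla_y\cdot(G(y,S)p)$ as a perturbation that does not destroy the $L^p_x$ bounds provided $G$ has subcritical growth in $S$. First I would set up the a priori estimates: integrating the master equation \eqref{kinmodel_y intro} in $v$ and $y$ shows that the total mass $M=\int\int\int p\,dv\,dy\,dx$ is conserved (the scattering terms cancel after integration in $v$, and the $y$-divergence integrates to zero), so $\rho\in L^\infty_tL^1_x$ and by elliptic regularity $S$ is controlled in, say, $L^\infty_tW^{2,q}_x$ for every $q<\infty$ — in particular $\|S(t,\cdot)\|_{L^r(\R^3)}$ is bounded in terms of $M$ for all finite $r$, uniformly in time. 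This is exactly the ingredient that made the hypothesis \eqref{lra2} with $\alpha<r/(r-3)$ tractable; here, however, $S$ only enters through $\lambda[y]$ and $G$, not directly as a bound on $T$, so I would instead need to propagate a bound on a moment of $y$.

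The second, and central, step is to control $\int\int\int |y|\,p\,dv\,dy\,dx$ (or a higher moment), using the ODE structure. Multiplying \eqref{kinmodel_y intro} by $|y|$ and integrating, the transport-in-$y$ term produces $\int\int\int \nabla_y(|y|)\cdot G(y,S)\,p \le \int\int\int |G(y,S)|\,p \le C\int\int\int(1+|y|+S^\alpha)\,p$, while the scattering operator conserves the $y$-moment because $\lambda[y]K(v,v')$ only redistributes in $v$. By a Gronwall argument the $y$-moment grows at most like $\exp(Ct)$ provided $\int\int\int S^\alpha p\,dv\,dy\,dx$ stays finite; and since $0\le\alpha<1$ one can bound $\int S^\alpha\rho$ by Hölder and Young's convolution inequality using $\|S\|_\infty\lesssim \|\rho\|_{L^{3/2+}}$ and the Strichartz/dispersion control of $\rho=\int\int p$ in a space $L^a_tL^b_x$ with $1<p<3/2$ — this is where the restriction to $p<3/2$ and the subcritical exponent $\alpha<1$ are used, exactly in parallel with the sublinear case sketched in Section \ref{sublinear}. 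Then $\lambda[y]\le C(1+|y|)$ converts the $y$-moment bound into a bound on the turning frequency $\lambda$ in $L^\infty_tL^1_x$ (after integration against $p$), which plays the role of the "coefficient" that the dispersion lemma of \cite{CP} needs in order to close the $L^p_xL^1_vL^1_y$ estimate on $p$ along the characteristics of $\partial_t+v\cdot\nabla_x$.

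The third step is the dispersion/Strichartz estimate itself: one writes the Duhamel formula for $p$ along the free transport in $(x,v)$ — the $\nabla_y\cdot(Gp)$ term and the gain term $\int \lambda[y]K(v,v')p\,dv'$ go on the right-hand side as source terms — and applies the dispersion estimate of \cite{CP} in the $(x,v)$ variables with $y$ carried as a parameter, then integrates in $y$. The loss term $-\lambda[y]p$ has a good sign and is discarded; the gain term is estimated by Young in $v$ using $\|K\|_\infty<\infty$, leaving $\int\lambda[y]\big(\int p\,dv'\big)$, which is handled by the $L^1_x$ bound on $\lambda\cdot f$ from Step 2; the $y$-transport term is estimated after integration by parts in $y$, again using the at-most-linear growth of $G$ in $y$ and subcritical growth in $S$. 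A fixed-point / approximation argument (regularize $\lambda$, $G$, $K$; solve the regularized system; pass to the limit using the uniform bounds just obtained) then produces the global weak solution with $p\in L^p_xL^1_vL^1_y$ for a suitable $1<p<3/2$.

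The main obstacle I expect is Step 2: closing the estimate on the $y$-moment while simultaneously needing the dispersion bound on $\rho$ (which in turn needs the $\lambda$-bound coming from that same $y$-moment). The two estimates are coupled, so one must run them together in a single Gronwall/bootstrap argument on the quantity $\Phi(t)=\|p(t)\|_{L^p_xL^1_vL^1_y}+\int\int\int|y|\,p(t)\,dv\,dy\,dx$, and it is precisely the condition $\alpha<1$ that makes the $S^\alpha$ feedback term subcritical enough for the loop to close without a smallness assumption; the exact admissible range of $p$ comes out of matching the Strichartz exponents in \cite{CP} with the Hölder exponents used to absorb $\int S^\alpha\rho$.
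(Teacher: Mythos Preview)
Your overall architecture---mass conservation, a differential inequality for the first $y$-moment $\iint |y|\mu\,dx\,dy$, dispersion in $(x,v)$, and a coupled Gronwall---matches the paper's proof. But two of the concrete estimates you invoke do not work as stated, and the second one is where the argument would actually break.

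First, the claim in Step~1 that ``by elliptic regularity $S$ is controlled in $L^\infty_t W^{2,q}_x$ for every $q<\infty$'' and that $\|S(t)\|_{L^r}$ is bounded by $M$ for all finite $r$ is false in $d=3$. From $\rho\in L^1$ alone you only get $S=G*\rho\in L^r$ for $r<3$, since $G\sim |x|^{-1}$ near the origin. This matters later.

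Second, and more seriously, your control of $\int S^\alpha\rho\,dx$ via $\|S\|_\infty\lesssim\|\rho\|_{L^{3/2+}}$ is circular. Dispersion in $d=3$ requires $\lambda=3/p'<1$, hence $p<3/2$; you never have $\rho\in L^{3/2+}$ available to feed into $\|S\|_\infty$. The paper avoids this entirely: it puts the full H\"older weight on $S^\alpha$ rather than on $\rho$, writing
\[
\int S^\alpha\rho\,dx\le \|S\|_{L^{\alpha p'}}^\alpha\|\rho\|_{L^p},
\]
and then chooses $p$ close enough to $3/2$ (from below) that $3<p'<3/\alpha$, so $\alpha p'<3$ and $\|S\|_{L^{\alpha p'}}\le \|G\|_{L^{\alpha p'}}\|\rho\|_{L^1}\le CM$. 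This gives $\int S^\alpha\rho\le CM^\alpha\|\rho\|_{L^p}$ using \emph{only} mass conservation on $S$, which is exactly what makes the coupled Gronwall close without any smallness assumption. This is the step where the subcriticality $\alpha<1$ is really used.

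Finally, your Step~3 is more complicated than necessary. Treating $\nabla_y\cdot(Gp)$ as a source term and applying dispersion ``with $y$ as a parameter, then integrating in $y$'' runs into the problem that the $L^1_xL^p_v$ norm of $\nabla_y\cdot(Gp)$ at fixed $y$ has no reason to integrate to zero in $y$. The paper sidesteps this completely by integrating in $y$ \emph{before} writing Duhamel: since $\int_y\nabla_y\cdot(Gp)\,dy=0$, one obtains directly
\[
\partial_t f + v\cdot\nabla_x f \le \frac{1}{|V|}\int_y\lambda[y]\,\mu(t,x,y)\,dy,
\]
and then applies the dispersion lemma to $f$. There is no need to carry $y$ through the dispersion step at all; the $y$-dependence is entirely absorbed into the moment $\iint\lambda[y]\mu\,dx\,dy\le C(M+\iint|y|\mu\,dx\,dy)$ that you already controlled in Step~2.
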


\section{The dispersion lemma applied to kinetic chemotaxis, and the symmetrization trick}

\label{sec:dispersion}

In this section we present a direct application of the dispersion lemma \cite{CP} to system  \eqref{kinmodel}. As a consequence we are led to the following question, which is decoupled from \eqref{kinmodel}:
\begin{quote}
\emph{Investigate the critical norm for the turning kernel ensuring the bound
\[ 0 \leq T[S] \leq C\Big( 1 + \| \rho(t) \|_{L^{p}}\Big)\ , \]
for $p<d'$ in dimension $d$.}
\end{quote}
The rest of this paper will be devoted to this question of critical growth.

\begin{lemma} \label{lem:dispersion} Assume the turning kernel can be controlled without any dependence on the velocity variables $v$ nor $v'$:
\[ 0 \leq T[S](t,x,v,v') \leq T[S](t,x)
 .\] Then, applying the dispersion estimate, we get the following for $p\in [1,d')$:
 \[ \| \rho(t) \|_{L^p} \leq   \|f_0(x-tv,v)\|_{L^{p}_{x}L^{1}_{v}} + |V|^{1/p} \int_{s=0}^t (t-s)^{-\lambda} \int_x T[S](s,x)   \rho(s,x) \ dxds \ , \]
where $\lambda = d / p'$.
\end{lemma}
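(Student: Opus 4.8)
The plan is to start from the mild (Duhamel) formulation of the transport equation \eqref{eq:ODA f} along characteristics and then integrate in $v$ and take the $L^p_x$ norm, using the dispersion estimate of \cite{CP} to control the contribution of the gain term. First I would write, for the transport operator $\partial_t + v\cdot\nabla_x$ with characteristics $x \mapsto x - (t-s)v$,
\[
f(t,x,v) = f_0(x - tv, v) + \int_0^t \Big[ \mathcal{G}[f](s, x-(t-s)v, v) - \mathcal{L}[f](s,x-(t-s)v,v)\Big]\, ds,
\]
where $\mathcal{G}$ is the gain (in-scattering) term and $\mathcal{L}$ the loss (out-scattering) term in the right-hand side of \eqref{eq:ODA f}. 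Since the loss term is nonnegative (it has the form $\lambda[S](t,x,v)f(t,x,v)$ with $\lambda \geq 0$) and we only want an upper bound on $\rho = \int_V f\,dv \geq 0$, I would simply drop it. Using the hypothesis $0 \leq T[S](t,x,v,v') \leq T[S](t,x)$, the gain term is bounded pointwise by
\[
\mathcal{G}[f](s,y,v) = \int_V T[S](s,y,v,v') f(s,y,v')\, dv' \leq T[S](s,y)\, \rho(s,y),
\]
which crucially no longer depends on $v$. This yields the pointwise bound
\[
0 \leq f(t,x,v) \leq f_0(x-tv,v) + \int_0^t T[S](s, x-(t-s)v)\, \rho(s, x-(t-s)v)\, ds.
\]

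Next I would integrate in $v$ over $V$ to produce $\rho(t,x)$ on the left, and take the $L^p_x(\R^d)$ norm of both sides, applying the triangle (Minkowski) inequality in the two terms and Minkowski's integral inequality to pull the $L^p_x$ norm inside the $\int_0^t ds$. The first term gives exactly $\|f_0(x-tv,v)\|_{L^p_x L^1_v}$ after exchanging the $v$-integration and the $L^p_x$ norm (again Minkowski's integral inequality, or one notes it is a fixed quantity). For the second term, for each fixed $s$ I need
\[
\Big\| \int_V T[S](s,\cdot - (t-s)v)\,\rho(s,\cdot-(t-s)v)\, dv \Big\|_{L^p_x}.
\]
Here is where the dispersion estimate of \cite{CP} enters: it controls the $L^p_x$ norm of the velocity-average of a function translated along free transport for time $t-s$ by $(t-s)^{-\lambda}$ times a (lower-integrability, velocity-integrated) norm of the function, with $\lambda = d/p'$. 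Applying it to $g(s,y) := T[S](s,y)\rho(s,y)$ — which is independent of $v$, so the velocity average over the bounded set $V$ only produces a Jacobian/measure factor — gives
\[
\Big\| \int_V g(s, \cdot - (t-s)v)\, dv \Big\|_{L^p_x} \leq C\, |V|^{?}\, (t-s)^{-\lambda}\, \|g(s,\cdot)\|_{L^1_x},
\]
and after the change of variables $v \mapsto (t-s)v$ the dispersion constant works out so that the prefactor is $|V|^{1/p}$ and $\|g(s)\|_{L^1_x} = \int_x T[S](s,x)\rho(s,x)\,dx$. Putting the two contributions together and integrating over $s \in [0,t]$ gives precisely the claimed inequality.

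The main obstacle — and the only point requiring care — is the bookkeeping of the dispersion estimate: getting the exponent $\lambda = d/p'$ and the precise power $|V|^{1/p}$ right, and checking that the condition $p \in [1, d')$ is exactly what makes $\int_0^t (t-s)^{-\lambda}\,(\cdots)\,ds$ a meaningful (locally integrable in $s$, since $\lambda = d/p' < 1 \iff p < d'$) expression, so that the estimate can subsequently be iterated in a Gronwall/fixed-point argument. All the functional-analytic steps — dropping the nonnegative loss term, the pointwise domination of the gain term using the velocity-free control of $T[S]$, and the two applications of Minkowski's inequality — are routine; the content is entirely in invoking the \cite{CP} dispersion estimate with the correct scaling.
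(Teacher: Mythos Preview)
Your proposal is correct and follows essentially the same route as the paper: drop the nonnegative loss term in the Duhamel representation to get the pointwise bound $f(t,x,v)\le f_0(x-tv,v)+\int_0^t T[S](s,x-(t-s)v)\rho(s,x-(t-s)v)\,ds$, then apply the $L^p_xL^1_v$ dispersion estimate of \cite{CP}. The paper's proof is even terser (it takes the $L^p_xL^1_v$ norm directly rather than first integrating in $v$), but the content is identical, and your bookkeeping sketch for the constant $|V|^{1/p}$ and exponent $\lambda=d/p'$ is right --- it drops out of the change of variables $w=(t-s)v$ together with Young's inequality $\|\chi_{(t-s)V}*g\|_{L^p}\le \|\chi_{(t-s)V}\|_{L^p}\|g\|_{L^1}$.
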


Observe that the condition $d<p'$ is crucially required here to ensure { further} the time integrability of the right-hand-side.

\begin{proof}
As usual we represent the solution of \eqref{kinmodel} as
\[f(t,x,v)\leq f_0(x-tv,v) + \int_{0}^{t} T[S](s,x-(t-s)v) \rho(s,x-(t-s)v) ds \ .\]
Using dispersion we get immediately 

\[\|f(t,x,v)\|_{L^{p}_{x}L^{1}_{v}} \leq \|f_0(x-tv,v)\|_{L^{p}_{x}L^{1}_{v}} + 
\int_{0}^{t} \frac{1}{(t-s)^{d(1-1/p)}} \Big\| T[S](s,x) \rho(s,x) \Big\|_{L^1_x L^p_v} \!  ds \ .\]


\end{proof}

As an observation, we state also a second lemma, which is interesting in its own right, but which will not be used in the sequel. Following \cite{Perthame04b}, it claims that a kernel which is symmetric with respect to $v$ and $v'$ ensures global existence. It is relevant from the mathematical point of view because we consider bounds that do not depend on $v$ and $v'$. It is biologically irrelevant however in the case of a purely symmetric kernel because no directed motion emerges in the drift-diffusion limit \cite{CMPS}.

\begin{lemma}
Consider the scattering equation,
\begin{equation}\label{13}
\partial_{t} f + v\cdot \nabla_{x} f = \int_{V} \left(K(t,x,v,v')f(t,x,v') - K(t,x,v',v)f(t,x,v)\right) dv'\ .
\end{equation}
and assume that $K$ is symmetric w.r.t. $v$ and $v'$, i.e.
\begin{equation}
K(t,x,v,v')= K(t,x,v',v) \geq 0 .
\end{equation}
Then all $L^p_xL^p_v-$norms of the density $f$ ($1\leq p<\infty$) are uniformly estimated like
$\norm{f(t)}{L^{p}_{x,v}} \leq \norm{f_0}{L^{p}_{x,v}}$. 
\end{lemma}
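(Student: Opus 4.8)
The plan is to prove the estimate by testing the scattering equation \eqref{13} against $p\,f|f|^{p-2}$ (equivalently, by computing $\frac{d}{dt}\|f(t)\|_{L^p_{x,v}}^p$) and showing that the symmetry of $K$ forces the right-hand side to be nonpositive. I would work with smooth, rapidly decaying solutions first and obtain the bound by approximation/density in the general case, so that all the integrations below are justified.

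First I would note that the transport part contributes nothing: since $v$ is a parameter and $\nabla_x\cdot v=0$, we have $\int_{\R^d} (v\cdot\nabla_x f)\,|f|^{p-2}f\,dx = \frac1p\int_{\R^d} v\cdot\nabla_x(|f|^p)\,dx = 0$ by the divergence theorem (the boundary terms vanish for decaying $f$). Hence
\[
\frac1p\frac{d}{dt}\int_{\R^d}\int_V |f|^p\,dv\,dx
= \int_{\R^d}\int_V |f|^{p-2}f(t,x,v)\,\Big(\int_V \big(K(t,x,v,v')f(t,x,v')-K(t,x,v',v)f(t,x,v)\big)\,dv'\Big)\,dv\,dx.
\]
The main computational step is to symmetrize the velocity double integral. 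For fixed $(t,x)$, set $g(v)=f(t,x,v)$. Using $K(t,x,v,v')=K(t,x,v',v)=:K(v,v')\ge 0$ and swapping the names of $v$ and $v'$ in the relevant piece, the inner expression becomes
\[
\int_V\!\int_V K(v,v')\Big(|g(v)|^{p-2}g(v)\,g(v') - |g(v)|^p\Big)\,dv'\,dv
= -\frac12\int_V\!\int_V K(v,v')\,\big(|g(v)|^{p-2}g(v)-|g(v')|^{p-2}g(v')\big)\big(g(v)-g(v')\big)\,dv'\,dv.
\]
The final inequality follows from the fact that $s\mapsto |s|^{p-2}s$ is nondecreasing on $\R$, so the product $\big(|a|^{p-2}a-|b|^{p-2}b\big)(a-b)\ge 0$ for all real $a,b$; combined with $K\ge 0$ this makes the symmetrized integrand nonnegative, hence the whole expression is $\le 0$. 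Therefore $\frac{d}{dt}\|f(t)\|_{L^p_{x,v}}^p\le 0$, and integrating in time gives $\|f(t)\|_{L^p_{x,v}}\le\|f_0\|_{L^p_{x,v}}$.

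The step I expect to be the main obstacle is the rigorous justification rather than the algebra: one must ensure the double integral in $v,v'$ and the $x$-integration are finite so that the symmetrization (relabeling $v\leftrightarrow v'$) and the differentiation under the integral sign are legitimate. Since $V$ is bounded and $K\ge 0$, I would first assume $K$ is bounded and $f$ is a smooth solution with enough integrability (e.g. $f_0\in L^p\cap L^\infty$ with suitable decay), carry out the computation above, and then recover the general statement by a standard approximation argument, using the lower semicontinuity of the $L^p$ norm to pass the inequality $\|f(t)\|_{L^p_{x,v}}\le\|f_0\|_{L^p_{x,v}}$ to the limit. A minor point worth recording is that the case $p=1$ is even simpler and follows directly from mass conservation; for $1<p<\infty$ the convexity/monotonicity inequality above is the essential ingredient.
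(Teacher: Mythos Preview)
Your argument is correct and is essentially the same as the paper's: multiply by $p f^{p-1}$, use that the transport term integrates to zero, and symmetrize the $v,v'$ double integral to obtain a nonpositive right-hand side via the monotonicity of $s\mapsto |s|^{p-2}s$ (the paper simply writes $f^{p-1}$ since $f\ge 0$ as a density). Your additional remarks on justification by approximation and on the $p=1$ case are reasonable but not needed beyond what the paper does.
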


\begin{proof}
First rewrite \eqref{13} using the symmetry property. It becomes
\begin{equation}\label{13b}
\partial_{t} f + v\cdot \nabla_{x} f = \int_{V} K(t,x,v,v') \left(f(t,x,v') - f(t,x,v)\right) dv'\ .
\end{equation}
Next multiply \eqref{13b} by $p f^{p-1}(t,x,v)$ to get
\begin{align*}
& \partial_{t} f^{p} + v\cdot \nabla_{x} f^{p} = p \int f^{p-1}(t,x,v) 
K(t,x,v,v') \left(f(t,x,v') - f(t,x,v)\right) dv' 
\end{align*}
Integrate with respect to $x$ and $v$ to get
\begin{align*}
&\dfrac d{dt} \iint f^{p} dv dx   = 
 p \iiint f^{p-1}(t,x,v) 
K(t,x,v,v')\left(f(t,x,v') - f(t,x,v)\right) dv' dv dx\ .
\end{align*}
We can symmetrize the latter expression to obtain eventually
\begin{multline*}
 \dfrac d{dt} \iint f^{p} dv dx   = \\ 
 - \frac p2 \iiint K(v,v') 
 \left(f^{p-1}(t,x,v)-f^{p-1}(t,x,v')\right)\left(f(t,x,v) - f(t,x,v')\right) dv' dv dx \ .
\end{multline*}
Since $f\geq 0$ we have 
\[\left(f^{p-1}(t,x,v)-f^{p-1}(t,x,v')\right)\left(f(t,x,v) - f(t,x,v')\right) \geq 0\ , \]
because these two factors always have the same sign.
It follows that 
\begin{equation*}
\dfrac d{dt}  \iint f^{p} dv dx \leq 0  \ .
\end{equation*}
\end{proof}

\section{Exponential growth in $L^\infty$ in dimension 2}\label{d=2}

In this Section we prove Theorem \ref{exp3C}.  
Working as in the proof of Trudinger's inequality we expand the exponential into a power series
and use Young's inequality as in \cite{CMPS, BCGP} to estimate each term. The dispersion method is then used as in \cite{BCGP} through \eqref{lem:dispersion}.
Throughout these processes we keep track of the growth of the various constants in order to make sure that the resulting series converges.
A similar approach has been used in \cite{IMM1, IMM2, NO1, NO2} to study nonlinear wave and Schr\"odinger equations.
We will need the following two Lemmas.

\begin{lemma}\label{exp3A} Let 
$G(x)= \frac{1}{4\pi}\int_{0}^{\infty} e^{- \pi \frac{|x|^2}{s}} e^{-\frac{s}{4\pi}} \frac{ds}{s}$. 
There exists a  positive constant $A$ such that
\begin{equation}\label{exp3B}
G(x) \leq A + \frac{1}{2\pi} \abs{\log|x|\,}\ \ ,\ \ |x|\leq 1 .
\end{equation}
\end{lemma}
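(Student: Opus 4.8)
The plan is to split the defining integral for $G(x)$ at the scale $s \simeq |x|^2$, where the Gaussian factor $e^{-\pi|x|^2/s}$ changes behaviour, and to estimate the two pieces separately while keeping track of the logarithmic divergence. First I would write
\[ G(x) = \frac{1}{4\pi}\left( \int_0^{|x|^2} + \int_{|x|^2}^{\infty} \right) e^{-\pi|x|^2/s}\, e^{-s/4\pi}\, \frac{ds}{s} =: \frac{1}{4\pi}(I_1 + I_2). \]
For $I_1$, the point is that $e^{-s/4\pi}\leq 1$ and the remaining integrand $e^{-\pi|x|^2/s}/s$ has a finite integral over $(0,|x|^2]$: substituting $u = |x|^2/s$ turns it into $\int_1^\infty e^{-\pi u}\,du/u \leq \int_1^\infty e^{-\pi u}\,du < \infty$, a universal constant independent of $x$. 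So $I_1$ contributes only to the constant $A$.

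The logarithm comes entirely from $I_2$. There $e^{-\pi|x|^2/s}\leq 1$, so $I_2 \leq \int_{|x|^2}^{\infty} e^{-s/4\pi}\,ds/s$. I would split this once more at $s=1$ (legitimate since $|x|\leq 1$ forces $|x|^2\leq 1$): on $[1,\infty)$ the integral $\int_1^\infty e^{-s/4\pi}\,ds/s$ is again a finite universal constant absorbed into $A$; on $[|x|^2,1]$ we bound $e^{-s/4\pi}\leq 1$ and compute $\int_{|x|^2}^{1} ds/s = -\log|x|^2 = 2|\log|x||$ (using $|x|\leq 1$ so that $\log|x|\leq 0$). Hence $I_2 \leq 2|\log|x|| + (\text{const})$, and dividing by $4\pi$ gives the claimed coefficient $\frac{1}{2\pi}|\log|x||$ exactly, with all the leftover universal constants collected into $A$.

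I do not expect any serious obstacle here; the only point requiring a little care is making sure the constant in front of $|\log|x||$ is exactly $\frac{1}{2\pi}$ and not merely $O(1)$ — that is why the split for $I_2$ must be done at $s=1$ with the sharp bound $e^{-s/4\pi}\leq 1$ on $[|x|^2,1]$, so that the divergent part produces precisely $\frac{1}{4\pi}\cdot 2|\log|x||$. (One could equally note that $G$ is, up to normalization, the Bessel potential / fundamental solution of $-\Delta + 1$ in $\R^2$, whose well-known short-range asymptotics is $\frac{1}{2\pi}\log(1/|x|) + O(1)$; the elementary splitting above is just a self-contained way to see the upper bound with the correct constant.)
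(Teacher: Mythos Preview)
Your argument is correct and is essentially the paper's own proof: the paper splits the integral into three pieces at $s=|x|^2$ and $s=1$, which is exactly your two-step split $I_1$ then $I_2=[\,|x|^2,1]\cup[1,\infty)$, and bounds each piece the same way (your substitution $u=|x|^2/s$ for $I_1$ is the reciprocal of the paper's $s=|x|^2 t$). The paper also records the exact Bessel-potential asymptotics in a remark, just as you do.
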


\begin{proof}
Fix $x$ with $|x|\leq 1 $. Write $G(x)=G_{1}(x) + G_{2}(x) + G_{3}(x)$ where
\begin{align*}
G_{1}(x)&=\frac{1}{4\pi}\int_{0}^{|x|^2} e^{- \pi \frac{|x|^2}{s}} e^{-\frac{s}{4\pi}} \frac{ds}{s} ,\\
G_{2}(x)&=\frac{1}{4\pi}\int_{|x|^2}^{1} e^{- \pi \frac{|x|^2}{s}} e^{-\frac{s}{4\pi}} \frac{ds}{s} ,\\
G_{3}(x)&=\frac{1}{4\pi}\int_{1}^{\infty} e^{- \pi \frac{|x|^2}{s}} e^{-\frac{s}{4\pi}} \frac{ds}{s} .
\end{align*}
For $G_{1}$ use $e^{-s/4\pi} \leq 1$ and then change variables $s\mapsto t$, where $s=|x|^2 t$, to get 
$G_{1}(x)\leq \frac{1}{4\pi}\int_{0}^{1} e^{-\pi\frac{1}{t}} \frac{dt}{t}=: A_{1}$.
For $G_{2}$ we have 
$G_{2}(x) \leq \frac{1}{4\pi}\int_{|x|^2}^{1} \frac{ds}{s} = \frac{-\log |x|}{2\pi}$.
 For $G_{3}$ use $e^{-\pi |x|^2 /s} \leq 1$ to get 
$G_{3}(x) \leq \frac{1}{4\pi}\int_{1}^{\infty} e^{-\frac{s}{4\pi}} ds =: A_{2}$.

\end{proof}

\begin{remark}
 In fact  the exact asymptotics of $G$ near the origin is: 
 \[ G(x) = -\frac1{2\pi} \log|x| + \gamma + \frac1{2\pi} \log 2 + o(1) \ ,  \] 
  where $\gamma$ is the Euler constant.
\end{remark}

\begin{lemma}\label{exp6A}
For $x > 0$ define $\Gamma(x)=\int_{0}^{\infty} t^{x-1} e^{-t} dt .$
Then (Stirling's formula)
\begin{align}
&n! = \Gamma(n+1) \sim \sqrt{2\pi n} \left(\frac{n}{e}\right)^{n} \ \ (n\to+\infty)   ,  \label{exp6B}\\
&\Gamma(x+1) \sim \sqrt{2\pi x} \left(\frac{x}{e}\right)^{x} \ \ (x\to+\infty)\label{exp6C} .
\end{align}
Moreover, for all $\beta>0$, $x>0$, 
\begin{equation}\label{exp6D}
 e^{x} > \frac{x^{\beta}}{\Gamma(\beta+1)} .
\end{equation}
\end{lemma}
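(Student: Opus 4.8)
The plan is to handle the two asymptotic statements \eqref{exp6B}--\eqref{exp6C} together by Laplace's (saddle-point) method applied to the Gamma integral, and to prove the inequality \eqref{exp6D} by a self-contained substitution argument that does not rely on the asymptotics at all. Since $n!=\Gamma(n+1)$, the integer version \eqref{exp6B} is merely the restriction of the continuous version \eqref{exp6C} to integer arguments, so it suffices to establish \eqref{exp6C}.

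For \eqref{exp6C} I would start from $\Gamma(x+1)=\int_0^\infty t^x e^{-t}\,dt=\int_0^\infty e^{x\log t-t}\,dt$ and locate the maximum of the phase $t\mapsto x\log t-t$ at $t=x$. Rescaling by $t=x(1+u)$ gives
\[ \Gamma(x+1)=x^{x+1}e^{-x}\int_{-1}^\infty e^{x\phi(u)}\,du,\qquad \phi(u)=\log(1+u)-u, \]
where $\phi(0)=\phi'(0)=0$ and $\phi''(u)=-(1+u)^{-2}$, so $\phi$ is strictly concave with a unique maximum $0$ at $u=0$ and $\phi(u)\sim-u^2/2$ there, while $\phi(u)\to-\infty$ both as $u\to-1^+$ and as $u\to+\infty$. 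Laplace's method then yields $\int_{-1}^\infty e^{x\phi(u)}\,du\sim\sqrt{2\pi/x}$ as $x\to\infty$, whence $\Gamma(x+1)\sim x^{x+1}e^{-x}\sqrt{2\pi/x}=\sqrt{2\pi x}\,(x/e)^x$, which is \eqref{exp6C}.

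The main technical work, and the step I expect to be the real obstacle, is the rigorous justification of this asymptotic while keeping the constant $\sqrt{2\pi}$ sharp. One splits the $u$-integral into a central window $|u|\le\delta(x)$ with $\delta(x)\to0$ but $\sqrt{x}\,\delta(x)\to\infty$ (e.g. $\delta(x)=x^{-1/3}$): there one writes $\phi(u)=-u^2/2+O(u^3)$, so that after the rescaling $w=\sqrt{x}\,u$ the central integral equals $\sqrt{2\pi/x}\,(1+O(x^{-1/2}))$. For the tails one uses strict concavity and the sign of $\phi'$: for $u\ge\delta$ the bound $\phi(u)\le\phi(\delta)+\phi'(\delta)(u-\delta)$ gives an exponentially small contribution relative to $\sqrt{2\pi/x}$ since $x\phi(\delta)\to-\infty$, and the left tail $u\to-1^+$ is controlled similarly by the divergence $\phi(u)\to-\infty$. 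It is precisely the demand for the sharp constant, rather than a crude upper bound, that forces this careful Gaussian computation.

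For the inequality \eqref{exp6D} I would argue directly, avoiding the asymptotics entirely. Writing
\[ e^x\,\Gamma(\beta+1)=\int_0^\infty t^\beta e^{x-t}\,dt=\int_{-x}^\infty (u+x)^\beta e^{-u}\,du \]
after the substitution $u=t-x$, I split the integral at $u=0$. On $[0,\infty)$ one has $(u+x)^\beta\ge x^\beta$ because $\beta>0$, so that part is at least $x^\beta\int_0^\infty e^{-u}\,du=x^\beta$; on $[-x,0]$ the integrand $(u+x)^\beta e^{-u}$ is strictly positive, contributing a positive amount. Hence $e^x\,\Gamma(\beta+1)>x^\beta$, which is exactly \eqref{exp6D}. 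This half is elementary and carries no genuine difficulty; the only point worth noting is that the strict inequality comes for free from the positive contribution of the $[-x,0]$ piece.
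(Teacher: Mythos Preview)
Your proposal is correct. For \eqref{exp6B}--\eqref{exp6C} the paper simply invokes them as ``well known'' without proof, so your Laplace-method argument is more than what is required. For \eqref{exp6D} your argument is essentially identical to the paper's once the substitution is unwound: in the original variable your split at $u=0$ is a split at $t=x$, and your bound $(u+x)^\beta\ge x^\beta$ on $[0,\infty)$ together with discarding the positive $[-x,0]$ piece is exactly the paper's chain $\Gamma(\beta+1)>\int_x^\infty t^\beta e^{-t}\,dt>x^\beta\int_x^\infty e^{-t}\,dt=x^\beta e^{-x}$.
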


\begin{proof}
\eqref{exp6B} and \eqref{exp6C} are well known. 
For \eqref{exp6D} we have 
\[\Gamma(\beta+1) = \int_{0}^{\infty} t^{\beta} e^{-t} dt > \int_{x}^{\infty}t^{\beta} e^{-t} dt >
x^{\beta} \int_{x}^{\infty} e^{-t} dt = x^{\beta} e^{-x} .\]

\end{proof}

\begin{proof}[Proof of Theorem \ref{exp3C}]
Recall from section \ref{sec:dispersion} that a control of the turning kernel like $ T[S] \leq C\big( \| \rho(t) \|_{L^{p}}\big)$, is sufficient to guarantee global existence. The rest of this section is devoted to the proof of this estimate.

Pick $1<p<2$ and set $\mu = p'>2$. In case of $\beta = 1$, assume in addition that $\mu < \frac{2\pi}M$.

Write $S=S^{long}+S^{short}$ where \[S^{long}= \left( \mathbbm{1}_{|x|> 1} G(x)\right) \ast \rho\ , \mbox{and}\quad   S^{short}=\left( \mathbbm{1}_{|x|\leq 1} G(x)\right) \ast \rho\ .\] 
Since $0<\beta \leq 1$ we have
\[\norm{S}{L^\infty}^{\beta}\leq \left(\norm{\slong}{L^{\infty}} + \norm{\sshort}{L^{\infty}}\right)^{\beta} \leq
\norm{\slong}{L^{\infty}}^{\beta} + \norm{\sshort}{L^{\infty}}^{\beta}\ ,\]
where we have used the fact that $(x+y)^{\beta}\leq x^{\beta} + y^{\beta}$ for $x,y>0$ and $0<\beta\leq 1$.
Therefore
\[
\exp\left\{\norm{S(t,\cdot)}{L^{\infty}}^{\beta}\right\} \leq 
\exp\left\{\norm{S^{long}(t,\cdot)}{L^{\infty}}^{\beta}\right\} \cdot  
\exp\left\{\norm{S^{short}(t,\cdot)}{L^{\infty}}^{\beta}\right\} .
\]
For $S^{long}$ we have
\[\norm{S^{long}}{L^\infty}^{\beta} \leq \norm{ \mathbbm{1}_{|x|> 1} G(x)}{L^{\infty}}^{\beta}
 \norm{\rho}{L^1}^{\beta} \leq c M^{\beta} ,\]
 where $c$ is a positive constant (depending on $\beta$), therefore
\begin{align*}
\exp\left\{\norm{S(t,\cdot)}{L^{\infty}}^{\beta}\right\} &  \leq 
 e^{c M^{\beta}}   \exp\left\{\norm{S^{short}(t,\cdot)}{L^{\infty}}^{\beta}\right\}\\
& = e^{c M^{\beta}} \left( 1 + \sum_{j=1}^{\infty} \frac{1}{j!} \norm{S^{short}(t,\cdot)}{L^\infty}^{j\beta} \right) .
\end{align*}
For $S^{short}$ we have
\[
\norm{S^{short}(t,\cdot)}{L^{\infty}} \leq \norm{G(x) \mathbbm{1}_{|x|\leq 1} }{L^{\mu j}} 
\norm{\rho}{L^{\frac{\mu j}{\mu j-1}}}
\]
For all $j\geq 1$ we have 
$\frac{\mu j}{\mu j-1} \leq \frac{\mu }{\mu -1}  = p$ therefore
\begin{align*}
\norm{S^{short}(t,\cdot)}{L^\infty} & \leq   
\norm{G(x) \mathbbm{1}_{|x|\leq 1}  }{L^{\mu j}} 
   \norm{\rho(t,\cdot)}{L^1}^{1-\frac1j}  \norm{\rho(t,\cdot)}{L^p}^{\frac1j}\\
&= \norm{G(x) \mathbbm{1}_{|x|\leq 1}  }{L^{\mu j}} 
   M^{1-\frac1j}  \norm{\rho(t,\cdot)}{L^p}^{\frac1j} ,
\end{align*}
therefore
\[\norm{S^{short}}{L^\infty}^{j\beta} \leq \norm{ G(x) \mathbbm{1}_{|x|\leq 1}  }{L^{\mu j}}^{j\beta} 
M^{j\beta-\beta}  
\norm{\rho(t,\cdot)}{L^p}^{\beta} .\]
Consequently
\begin{equation}
\exp\left\{\norm{S(t,\cdot)}{L^{\infty}}^{\beta}\right\} \leq 
e^{c M^{\beta}} \left( 1 +  \left[\sum_{j=1}^{\infty} \frac{1}{j!}
\norm{G(x) \mathbbm{1}_{|x|\leq 1} }{L^{\mu j}}^{j\beta} 
M^{j\beta}\right] M^{-\beta} \norm{\rho}{L^p}^{\beta}\right)\ . \label{exp3J}
\end{equation}
We need to guarantee that the series in the above right-hand-side converges. Using \eqref{exp3B} we have:
\begin{align*}
\norm{G(x) \mathbbm{1}_{|x|\leq 1} }{L^{\mu j}} & \leq 
\norm{A + \frac{1}{2\pi} \abs{\log|x|}\,  }{L^{\mu j} (|x|\leq 1)}\\
&\leq A \pi^{\frac{1}{\mu j}} + \frac{1}{2\pi} \norm{\log|x|\,}{L^{\mu j}(|x|\leq 1)}\ ,
\end{align*}
and also
\begin{align*}
 \norm{\log|x|\,}{L^{\mu j}(|x|\leq 1)} 
&=\left( 2\pi \int_{0}^{1} \left(\, - \log r \, \right)^{\mu j} r dr   \right)^{1/\mu j}\\
&\leq \left( 2 \pi \int_{0}^{\infty} s^{\mu j} e^{-2s} ds   \right)^{1/\mu j} \\
&\leq  \left( 2 \pi \int_{0}^{\infty} \frac{s^{\mu j}}{\frac{s^{\mu j}}{\Gamma(\mu j+1)}   } e^{-s} ds   \right)^{1/\mu j} 
\ \ \ \text{by}\ \ \ 
\eqref{exp6D}\\
&=\left( 2 \pi\right)^{\frac{1}{\mu j}} \left( \Gamma(\mu j+1)  \right)^{\frac{1}{\mu j}} \ .
\end{align*}
As a consequence
\begin{equation}\label{exp3H}
\norm{G(x) \mathbbm{1}_{|x|\leq 1} }{L^{\mu j}}  \leq A \pi^{\frac{1}{\mu j}} + \frac{1}{2\pi} \left( 2 \pi \right)^{\frac{1}{\mu j}} 
\left( \Gamma(\mu j+1)  \right)^{\frac{1}{\mu j}} .
\end{equation}
Then the infinite sum in \eqref{exp3J} can be estimated by 
\begin{equation}\label{exp3K}
\sum_{j=1}^{\infty} \frac{1}{j!} \left(A \pi^{\frac{1}{\mu j}} + \frac{1}{2\pi} \left( 2 \pi \right)^{\frac{1}{\mu j}} 
\left( \Gamma(\mu j +1) \right)^{\frac{1}{\mu j}}
\right)^{j\beta} M^{j\beta} .
\end{equation}
We'll show that for $\beta<1$ the series converges for any mass $M$, and that
for $\beta=1$ it converges thanks to the restriction $\frac{M\mu}{ 2\pi}<1$. Using the root test we have 
\begin{align*}
&\left( \frac{1}{j!} \left(A \pi^{\frac{1}{\mu j}} + \frac{1}{2\pi} \left( 2 \pi\right)^{\frac{1}{\mu j}} 
\left( \Gamma(\mu j +1) \right)^{\frac{1}{\mu j}}
\right)^{j\beta} M^{j\beta} \right)^{\frac{1}{j}} \\
& \ \ \ \ = 
\frac{1}{\left(j!\right)^{\frac{1}{j}}} 
\left(A \pi^{\frac{1}{\mu j}} + \frac{1}{2\pi} \left( 2 \pi\right)^{\frac{1}{\mu j}} \left( \Gamma(\mu j +1) \right)^{\frac{1}{\mu j}}
\right)^{\beta} M^{\beta} \\
&\ \ \ \ \leq \frac{1}{\left(j!\right)^{\frac{1}{j}}} 
\left(A^{\beta} \pi^{\frac{\beta}{\mu j}} + \left(\frac{1}{2\pi}\right)^{\beta} \left( 2 \pi \right)^{\frac{\beta}{\mu j}} 
\left(\Gamma(\mu j +1)  \right)^{\frac{\beta}{\mu j}}
\right) M^{\beta} , 
\end{align*}
We have $\frac{1}{\left(j!\right)^{\frac{1}{j}}} A^{\beta} \pi^{\frac{\beta}{\mu j}} \to 0$, therefore it remains
to examine the limit of
\begin{equation}\label{exp6E}
 \frac{1}{\left(j!\right)^{\frac{1}{j}}} \left(\frac{1}{2\pi}\right)^{\beta} \left( 2 \pi \right)^{\frac{\beta}{\mu j}} 
\left( \Gamma(\mu j +1) \right)^{\frac{\beta}{\mu j}} M^{\beta}  .
\end{equation}
From \eqref{exp6B}  we have  $j! \sim \sqrt{2\pi j} \left(\frac{j}{e}\right)^{j} $ therefore
$ \left( j! \right)^{\frac{1}{j}} \sim \left(2\pi j\right)^{\frac{1}{2j}} \frac{j}{e} \sim  \frac{j}{e} .$
From \eqref{exp6C}  we have  $\Gamma(\mu j +1)  \sim \sqrt{2\pi \mu j} \left(\frac{\mu j}{e}\right)^{\mu j} $ therefore
\[ \left( \Gamma(\mu j +1)  \right)^{\frac{\beta }{\mu j}} \sim \left(2\pi \mu j\right)^{\frac{\beta}{2\mu j}} 
\left(\frac{\mu j}{e}\right)^{\beta} \sim 
 \left(\frac{\mu j}{e}\right)^{\beta} .\]
Therefore
\[
\eqref{exp6E}\  \sim \left(\frac{M}{2\pi}\right)^{\beta} \frac{\left(\frac{\mu j}{e}\right)^{\beta}}{  \frac{j}{e}}
\to \begin{cases} 
0 &,\ \ \text{if}\ \ \beta < 1 \\
  \frac{M \mu}{2\pi}  &,\ \ \text{if}\ \ \beta =1
\end{cases}  .
\]
The limit is smaller than 1 in all cases, therefore the series converges.

Summing up, we obtain 
\[
T[S](t,x)\leq C\left(1+ \exp\left\{\norm{S(t,\cdot)}{L^{\infty}}^{\beta}\right\}\right) \leq C + C \norm{\rho(t,\cdot)}{L^p}^{\beta}\ .
\]
Recall that we have choosen $p<2$
such that the Lemma \ref{lem:dispersion} applies. 
We end up with
\begin{equation*}
\norm{\rho(t,x)}{L^{p}}\leq t^{-\lambda}\|f_0(x,v)\|_{L^{1}_{x}L^{p}_{v}}   
 + C 
\int_{0}^{t}\Big(1+
\norm{\rho(s,x)}{L^{p}_{x}}^{\beta}\Big) \frac{ds}{(t-s)^\lambda}\ ,
\end{equation*}
where $\lambda = 2/p' <1$ so that we can bootstrap. 
\end{proof}

\section{(Almost) $L^\infty$ growth in dimension $3$}\label{d=3}

\subsection{Almost $L^\infty$ growth}

In this Section we consider the kinetic model \eqref{kinmodel} in $d=3$ dimensions under hypothesis \eqref{lra2}.

\begin{proof}[Proof of Theorem \ref{lra1}.]
If $1\leq r<3$, $\alpha >0$ and $T[S]$ satisfies \eqref{lra2} then $T[S]$ can be estimated {\em a priori} in terms of the mass $M$.
Indeed,  
\[\norm{S(t,\cdot)}{L^{r}(\R^3)}\leq \norm{G}{L^{r}(\R^3)}\norm{\rho(t,\cdot)}{L^1(\R^3)}\leq C M ,\]
because $G(x) \sim \frac{C}{|x|}$ for small $|x|$, and $G(x)$ decays exponentially for large $x$.
Therefore $T[S](t,x,v,v') \leq C + C M^{\alpha} $ and global existence follows easily.

Assume now that $3\leq r <\infty$ and $0<\alpha < \frac{r}{r-3}$.
Choose $p$ defined by \[ \frac1{p'} = \frac{\alpha(r-3)}{3r}<\frac13\ , \] and define $B$ such that
\[
\frac1{B'} =  \frac13 - \frac1r = \frac1{\alpha p'}\  .
\]


Using fractional integration \cite{LiebLoss} we get for the signal $S = G*\rho\leq \frac C{|x|}*\rho(x)$ (both short and long range parts),
\begin{align*}
\norm{S}{L^r}&\leq C \norm{\frac{1}{|x|} \ast \rho}{L^r}
\leq C
 \norm{\rho}{L^{B}}
 \leq C M^{1-\frac{p'}{B'}}\norm{\rho}{L^p}^{\frac{p'}{B'}} .
\end{align*}
Consequently we get the crucial estimate required in Lemma \ref{lem:dispersion}:
\[T[S](t,x)\leq C + C  \norm{S}{L^r}^\alpha \leq C+C \norm{\rho}{L^p} \ , \]
where $p$ is smaller than $3/2$.
%
We can complete the proof as in  Theorem \ref{exp3C}.

\end{proof}

\subsection{$L^\infty$ growth: global existence for small data}

\begin{proof}[Proof of Theorem \ref{3dsmall}]
We have 
\begin{equation}
 \label{sd1}
 \partial_{t} f + v \cdot\nabla_{x} f \leq   C \int_{V} \Big(1+\norm{S(t)}{L^\infty}\Big) f(t,x,v')\ dv' = C \Big(1+ \norm{S(t)}{L^\infty}\Big) \rho(t,x) .
\end{equation}
To apply the Strichartz estimate \cite{CP} we need four parameters $q,p,r,a$ such that
\begin{subequations}\label{sd30}
\begin{align}
 &1\leq r \leq p \leq \infty \label{sd3a}\\
 & 0 \leq \frac{1}{r} - \frac{1}{p} < \frac{1}{3} \\
 & 1 \leq \frac{1}{r} + \frac{1}{p}  \label{sd3c}\\
 &\frac{2}{q}= 3 \left( \frac{1}{r} - \frac{1}{p}\right) \label{sd3d}\\
 &a=\frac{2pr}{p+r} \label{sd3e}
\end{align}
\end{subequations}
More conditions will be imposed later. We get:
\begin{align}
 \norm{f}{L^{q}_{t} L^{p}_{x} L^{r}_{v}} & \leq \norm{f_0}{L^{a}_{x,v}} + C
 \Big\| (1+ \norm{S(t)}{L^\infty}) \rho(t,x)\Big\|_{L^{q'}_{t} L^{r}_{x} L^{p}_{v}} \nonumber \\
 & = \norm{f_0}{L^{a}_{x,v}} + C(|V|)
 \norm{ (1+ \norm{S(t)}{L^\infty}) \norm{\rho(t,x)}{L^{r}_{x}} \  }{L^{q'}_{t}  }  \label{sd2c}.
\end{align}
In the sequel we omit the constant part in the growth of the turning kernel for the sake of clarity. Assume 
\begin{equation}
 \label{sd7}
 p > \frac{3}{2} .
\end{equation}
Then $p'<3$ therefore,
\begin{equation}\label{sd21}
 \norm{S(t)}{L^\infty}\leq \norm{G * \rho(t)}{L^\infty} \leq \norm{G}{L^{p'}} \norm{\rho(t)}{L^{p}} 
 \leq C \norm{\rho(t)}{L^{p}} ,
\end{equation}
because $G(x) \sim \frac{C}{|x|}$ for small $|x|$, and $G(x)$ decays rapidly for large $|x|$.
Moreover, since $r\leq p$ we have by interpolation,
\begin{equation*}\label{sd22}
 \norm{\rho(t)}{L^r} \leq \norm{\rho(t)}{L^1}^{1-\frac{p'}{r'}} \norm{\rho(t)}{L^p}^{\frac{p'}{r'}} 
 = M^{1-\frac{p'}{r'}} \norm{\rho(t)}{L^p}^{\frac{p'}{r'}} .
\end{equation*}
Therefore
\begin{align*} 
 \norm{\ \norm{S(t)}{L^\infty} \norm{\rho(t,x)}{L^{r}_{x}}\  }{L^{q'}_{t} } 
&  \leq C  \norm{\ \norm{\rho(t)}{L^p} \ \norm{\rho(t)}{L^{p}}^{\frac{p'}{r'}} \ }{L^{q'}_{t}}\\
&=  \norm{\ \norm{\rho(t)}{L^p}   \ }{L^{q'\left(1 + \frac{p'}{r'} \right)}_{t}}^{ 1 + \frac{p'}{r'}  }
\end{align*}
Now
\[
\norm{\rho(t)}{L^p}=\norm{f(t,x,v)}{L^{p}_{x} L^{1}_{v}} \leq C(|V|) 
\norm{f(t,x,v)}{L^{p}_{x} L^{r}_{v}}
\]
therefore
\begin{equation*}
\norm{\ \norm{S(t)}{L^\infty} \norm{\rho(t,x)}{L^{r}_{x}}\  }{L^{q'}_{t} } 
\leq C
 \norm{f(t,x,v)}{L^{ q'\left(1 + \frac{p'}{r'} \right) }_{t} L^{p}_{x} L^{r}_{v}}^{ 1 + \frac{p'}{r'}  } 
\label{sd23e}.
\end{equation*}
Suppose that
\begin{equation}
 \label{sd24}
 q'\left(1 + \frac{p'}{r'} \right) = q .
\end{equation}
Then 
\begin{equation}
 \label{sd25}
 \norm{\ \norm{S(t)}{L^\infty} \norm{\rho(t,x)}{L^{r}_{x}}\  }{L^{q'}_{t} } 
 \leq C \norm{f(t,x,v)}{L^{ q }_{t} L^{p}_{x} L^{r}_{v}}^{ 1 + \frac{p'}{r'}  } 
\end{equation}
and plugging this into \eqref{sd2c} we get
\begin{equation*}
  \norm{f(t,x,v)}{L^{ q }_{t} L^{p}_{x} L^{r}_{v} } \leq \norm{f_0}{L^{a}_{x,v}} + 
  C \norm{f(t,x,v)}{L^{ q }_{t} L^{p}_{x} L^{r}_{v}}^{ 1 + \frac{p'}{r'}  } 
\end{equation*}
If $\norm{f_0}{L^{a}_{x,v}}$ is small enough then we can bootstrap.

\medskip

We need to verify that there exist $(q,p,r,a)$ satisfying \eqref{sd30}, \eqref{sd7} and \eqref{sd24}. There are many possible
choices. For example, if we want initial data $f_0 \in L^{a}_{x,v}$ with
$a=\frac{3}{2}$ (critical exponent in dimension 3) we must choose $p$ and $r$ so that 
$\frac{1}{p}+\frac{1}{r}=\frac{4}{3}$. 
The complete set of exponents solving these constraints is:
\[q=1+\sqrt{2}\ ,\ p=\frac{9+3\sqrt{2}}{7}\ ,\ r= 3 \left(\sqrt{2} -1 \right) \ ,\]
where all conditions are fulfilled.

\end{proof}

\subsection{Sublinear $L^\infty$ growth}\label{sublinear}

To close this section we
give a quick sketch of the observation in \cite{CMPS} that the hypothesis
\[
 0\leq T[S](t,x,v,v') \leq C\Big( 1 + \norm{S(t,\cdot)}{L^{\infty}}^{\alpha}\Big)\ ,
\]
implies global existence.
Fix $p$ and $q$ such that
\[ \frac{\alpha}3 + \frac1p = \frac1q \ , \quad p>\frac32\ . \]
Then we have the following elliptic estimate (see below),
\begin{equation}\label{ell}
\norm{S(t,\cdot)}{L^\infty}= \norm{G * \rho(t)}{L^\infty}
\leq C(M) \norm{\rho(t)}{L^p}^{p'/3}\ .
\end{equation}
Therefore (again omitting the constant contribution of the turning kernel)
\begin{equation*}
f(t,x,v)\leq 
f_0(x-tv,v) + C \int_{0}^{t}\norm{\rho(s)}{L^p }^{\alpha}   \rho(s,x-(t-s)v)  ds .
\end{equation*}
Take the $L^p_xL^q_v$ norm and use the dispersion estimate with $\lambda = 3(1/q -1/p) = \alpha$ to get
\begin{align}
\norm{f(t)}{L^{p}_xL^q_v}
&\leq t^{-\alpha}\|f_0(x,v)\|_{L^{q}_{x}L^{p}_{v}} + |V|^{1/p} \int_{0}^{t} \frac1{(t-s)^\alpha} \norm{\rho(s)}{L^p }^{p' \alpha/3} 
\norm{ \rho(s) }{L^{q}_{x}}  ds  \\
& \leq t^{-\alpha}\|f_0(x,v)\|_{L^{q}_{x}L^{p}_{v}} + C \int_{0}^{t} \frac1{(t-s)^\alpha} \norm{\rho(s)}{L^p }^{p' \alpha/3 + p'/q'}\ ds , \label{lto.5} 
\end{align}
where $p' \alpha /3 + p'/q' = 1$ by definition.

To prove the elliptic estimate \eqref{ell} write
\[S\leq C \rho * \frac{\chi_{|x|\leq R}}{|x|} + C \rho * \frac{\chi_{|x|\geq R}}{|x|}\]
Then, if $p'<3$, 
\begin{align*}
\|S\|_{L^\infty}& \leq C \|\rho\|_{L^p} \|\frac{\chi_{|x|\leq R}}{|x|}\|_{L^{p'}} + C 
\|\rho\|_{L^1} \|\frac{\chi_{|x|\geq R}}{|x|}\|_{L^{\infty}}\\
&\leq C \Big( \|\rho\|_{L^p} R^{\frac{3}{p'}-1} + \|\rho\|_{L^1} R^{-1} \Big) \ . 
\end{align*}
Choose $R$ so that $\|\rho\|_{L^p} R^{\frac{3}{p'}-1}=\|\rho\|_{L^1} R^{-1} $, {\em i.e.} choose
\[R=\left(\frac{\|\rho\|_{L^1}}{\|\rho\|_{L^p}}\right)^{p'/3} .\]
This gives 
\[\|S\|_{L^\infty} \leq C \|\rho\|_{L^p}^{p'/3} \|\rho\|_{L^{1}}^{1-p'/3} = C M^{1-p'/3} \|\rho\|_{L^p}^{p'/3}\ . \]

\section{Extension to internal dynamics}

\label{sec:internal}

Recall the kinetic model with internal dynamics:
\begin{subequations}\label{kinmodel_y}
\begin{align}
\partial_{t} p + v\cdot\nabla_{x} p + \nabla_y \cdot \Big( G(y,S) p \Big)= &  \int_{ v'\in V} \lambda[y] K(v,v') p(t,x,v',y) dv'
 \nonumber\\
   & \qquad \qquad  -  \lambda[y]  p(t,x,v,y)   \  ,\\
\label{eq:mean field} -\Delta S + S= \rho\  , &  
\end{align}
\end{subequations}
Assuming that $K$ is bounded we reduce to $K = 1/|V|$ without loss of generality.

This model takes into account the transport along characteristics of the internal cellular dynamics
\[ \frac{dy}{dt} = G(y,S(t,x))\ , \quad y\in \R^m\ . \]
For {\em E. coli}, the regulatory network described by $G$ is made of six main proteins essentially (named Che-proteins), and the main events are methylation and phosphorylation. 
Indeed, in the absence of a chemoattractant (basal activity), the phosphorylated protein Che-Y is supposed to diffuse inside the cell and to reach the flagella motor complex, enhancing switch between CCW rotation and CW rotation, that is tumbling. This transduction pathway is in fact inhibited when the chemoattractant (say aspartate) binds a membrane receptor, triggering methylation of the membrane receptor complex, and eventually inhibition of the tumbling process.

This network exhibits a remarkable excitation/adaptation behavior, which is crucial for cell migration. For the sake of simplicity, one deals in general with a system of two coupled ODEs which captures the same features. This system should be {\em excitable} with slow adaptation -- there is a single, stable equilibrium state, but a perturbation above a small threshold triggers a large excursion in the phase plane (see figure \ref{fig:FHN} ({\em left})) -- and possibly one-sided -- in case of positive chemotaxis, the cells do not respond specifically to a decrease of the chemoattractant concentration \cite{BB74}. This characterization of dynamical systems is very well known in biological modeling, as it is the basis of the FitzHugh-Nagumo models \cite{Murray} for potential activity in axons. Furthermore, it is often associated to the phenomenon of pulse wave propagation ({\em e.g.} calcium waves), \cite{Keener}. In the context of cell migration, it is also involved in the slime mold amoebae {\em D. discoideum} aggregation process, where the chemoattractant cAMP is relayed by the cells \cite{Hofer,DolakSchmeiser}.

To be more concrete, the following set of equations is generally proposed \cite{ErbanOthmer04}
\begin{equation}
\left\{ \begin{array}{rll}
  \dfrac {dy_1}{dt} = & \dfrac1{\tau_e} \big( h(S) - (y_1+y_2) \big) \quad & {\mbox (excitation)} \vspace{.2cm}\ , \\
  \dfrac {dy_2}{dt}  = & \dfrac1{\tau_a} \big( h(S) - y_2 \big) \quad & {\mbox (adaptation)} \ .
 \end{array}
\right.
\end{equation}
Considered to be decoupled from the transport equation, these two internal quantities relax respectively to 
\[ \lim_{t\to \infty} y_1 = 0 \ , \quad \lim_{t\to \infty} y_2 = h(S)\ , \] with a slow time scale associated to adaptation provided that $\tau_e\ll \tau_a$. However, this system cannot reproduce true excitability with a large gain factor for small perturbations because it is linear with respect to the variable $y$.  
In a slightly different context (pulsatory cAMP waves), Dolak and Schmeiser considered an even simpler system \cite{DolakSchmeiser}, namely
\begin{equation}
\left\{ \begin{array}{rll}
  y_1 = &\big( h(S) - y_2 \big)_+ \quad & {\mbox (excitation)} \ , \vspace{.2cm} \\
  \dfrac {d y_2}{dt}  = &\dfrac1{\tau_a} \big( h(S) - y_2 \big) \quad & {\mbox (adaptation)} \ .
 \end{array}
\right.
\end{equation}
This particular choice does select responses to one-sided stimuli, but fails for true excitability. We suggest to consider the following phenomenological translated slow-fast, FHN type, system,
\begin{equation}
\left\{ \begin{array}{rll}
  \dfrac {dy_1}{dt} = & \dfrac1{\tau_e} \big(h(S) - q(y_1) - y_2   \big) \quad & {\mbox (excitation)} \vspace{.2cm}\ , \\
  \dfrac {dy_2}{dt}  = & \dfrac1{\tau_a} \big( h(S) + y(1) - y_2 \big) \quad & {\mbox (adaptation)} \ ,
 \end{array}
\right.
\end{equation}
where $q$ is a cubic function depicted in figure \ref{fig:FHN}.

\begin{figure}
\includegraphics[width = .48\linewidth]{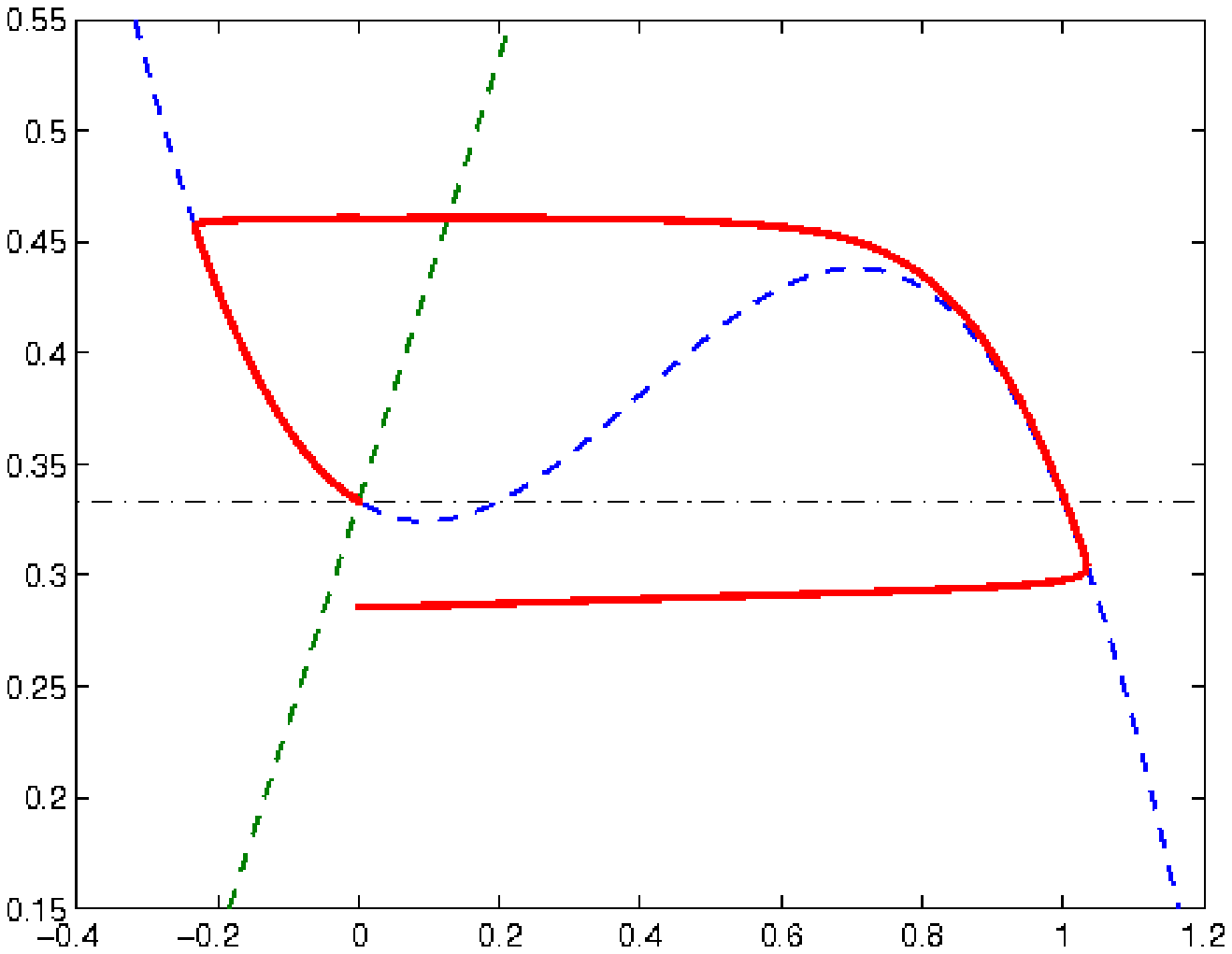} \,
\includegraphics[width = .48\linewidth]{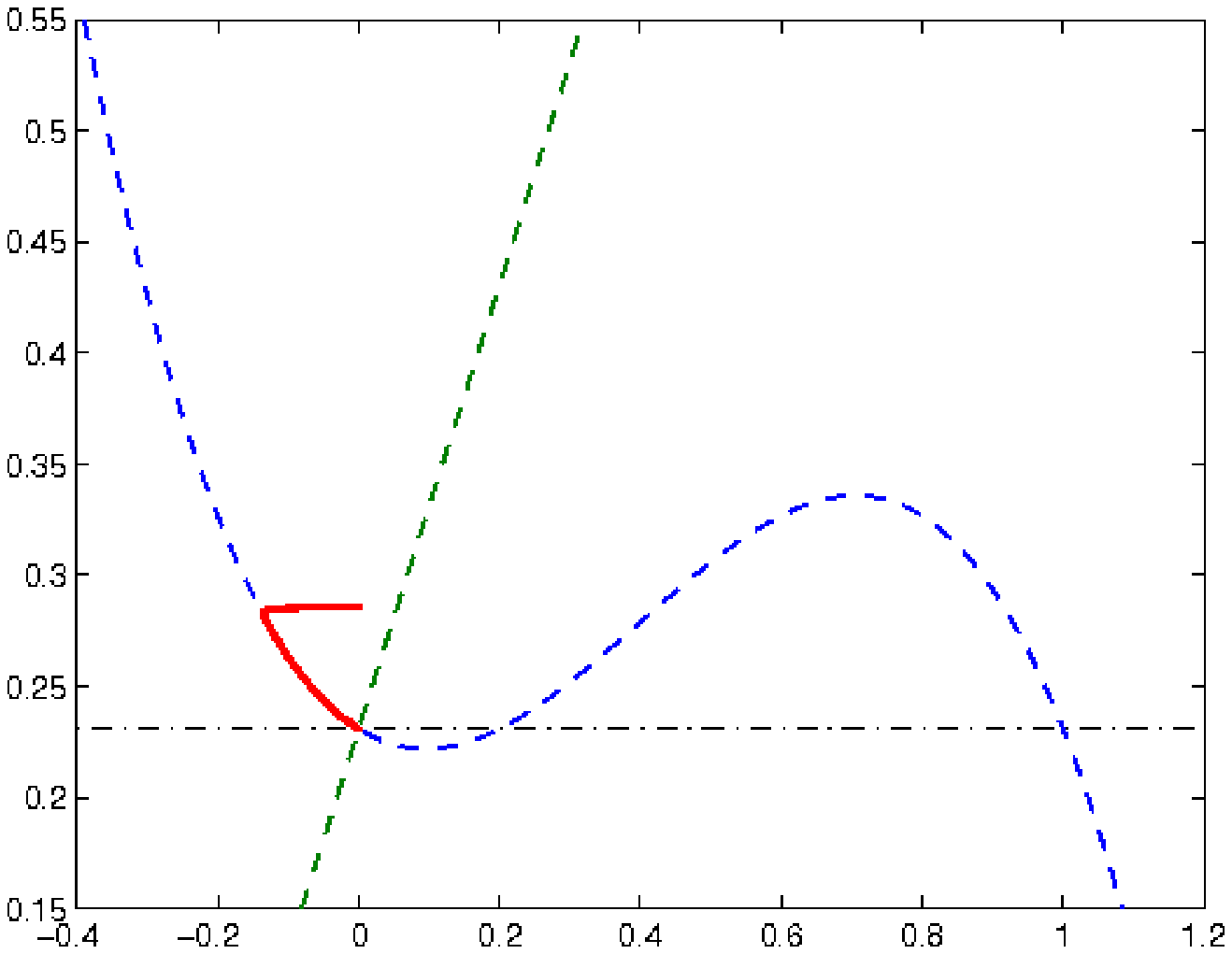} 
 \caption{A two coupled ODE system exhibiting short time excitation/mid time adaptation with one-sided selection. The picture is the same as for the FHN model. The perturbation of the equilibrium state is enhanced by a displacement of the basal line $y2 = h(S0+dS)$, translating the system up ({\em left}) or down ({\em right}). Here the cubic function is given by $q(u) = u(u-1)(u-.2)$, the saturating ligand function is given by $h(S) = S/(1+S)$, the basal aspartate concentration is $S0 = .4$ and the system reacts to  perturbations $dS =  .1$.} 
\label{fig:FHN}
\end{figure}

\begin{proof}[Proof of Theorem \ref{the:internal}.]
Our next step is to prove global existence under general and manageable assumptions settled in Theorem \ref{the:internal}. But let us begin with an important remark on the methodology.

\begin{remark}
To obtain {\em a priori} estimates, one possible strategy would be to use the characteristics to handle with \eqref{kinmodel_y}, as it is performed in \cite{ErbanHwang} in 1D. For this purpose, integrating the hyperbolic \eqref{kinmodel_y}
along the backward-in-time auxiliary problem
\[ \dot{X}(s) = v \ , \quad \dot{Y}(s) = G(Y,S(s,X)) \ ,\quad (X(t),Y(t)) = (x,y)\ , \] gives the estimate
\[  \dfrac{d}{ds} f(s,X(s),v,Y(s)) - \Big(\nabla_y\cdot G\Big) p  \leq \lambda[Y] \mu(s,X,Y)\ . 
\]
The difficulty arises at two levels here. First one has to control the $\nabla_y\cdot G$ contribution, and secondly one has to perform later on the change of variables $z = Y(y)$. This induces a Jacobian contribution $\left| \dfrac{\partial Y}{\partial y} \right|^{-1}$, and one has to control it too. In the sequel we avoid these two difficulties by working on averaged quantities.
\end{remark}

We use a partial representation formula of the solution from the free transport operator $\partial_t p + v\cdot \nabla_x p$. First integrating the equation with respect to $y$, we obtain
\[ \partial_t f + v\cdot \nabla_x f + 0 \leq \frac1{|V|} \int_y \lambda[y] \mu(t,x,y)\ dy\ , \] 
so that  
\[ f(t,x,v) \leq f_0(x-tv,v) + \frac1{|V|} \int_{s=0}^t \int_y \lambda[y] \mu(s,x-(t-s)v,y)\ dy ds\ . \]
Using the $L^p_xL^1_v$ dispersion Lemma \ref{lem:dispersion} we get as usual
\begin{align}
 \|\rho(t)\|_{L^p} &\leq  \|f_0(x-tv,v)\|_{L^{p}_{x}L^{1}_{v}}  + \frac1{|V|} \int_{s=0}^t \left\| \int_y \lambda[y] \mu(s,x-(t-s)v,y)\ dy \right \|_{L^p_xL^1_v}  ds \nonumber\\
&\leq t^{-\lambda}\|f_0(x,v)\|_{L^{1}_{x}L^{p}_{v}}   \nonumber
\\ &\qquad +  |V|^{1/p-1} \int_{s=0}^t \frac1{(t-s)^\lambda} \iint_{x,y} \lambda[y] \mu(s,x,y)\ dx dy ds\ ,\label{eq:estimate rho(y)}
\end{align}
where $\lambda=3/p'$.
We now use the two growth assumptions on $\lambda$ and $G$: \[\lambda[y]\leq C(1+|y|)\ , \quad |G|(y,S)\leq C(1 +  |y| + S^\alpha) \ ,\   0\leq \alpha<1\ , \] to control the time growth of the average quantity $\iint_{x,y} |y| \mu(t-s,x,y)\ dx dy $.

\begin{remark} Note that in dimension $d=2$ we can handle  any nonnegative $\alpha$. \end{remark}

We test the master equation \eqref{kinmodel_y} against $|y|$:
\[
 \dfrac{d}{d s} \iint_{x,y} |y| \mu(s,x,y) \ dxdy + 0 + \iint_{x,y} |y| \nabla_y\cdot (G(y,S) \mu(s,x,y))\ dy dx = 0\ ,
\]
therefore, using $|G|\leq C(1+|y|+S^\alpha)$,
\begin{align}  
\dfrac{d }{ds} \iint_{x,y} |y| \mu(s) \ dxdy & = \iint_{x,y} \frac{y}{|y|}\cdot G(y,S) \mu(s,x,y)\ dy dx\nonumber \\
& \leq  \iint_{x,y}  |G|(y,S) \mu(s,x,y)\ dy dx\nonumber \\
& \leq C + C \iint_{x,y} |y| \mu(s,x,y)\ dy dx + C \int_x |S(s,x)|^\alpha \rho(s,x)\ dx \ . \label{eq:preDuhamel}
\end{align}
  
  \begin{remark}
If we agree to diminish $\alpha$ it is possible to deal with higher exponents in $\lambda[y]\leq C(1+|y|^\gamma)$. For instance we have by Young's inequality:
\begin{align*} \dfrac{d }{ds} \iint_{x,y} |y|^\gamma \mu(s) \ dxdy & \leq \gamma \iint_{x,y} |y|^{\gamma-1} |G|(y,S) \mu(s,x,y)\ dy dx \\ &  \leq  C \iint_{x,y}  |y|^\gamma  \mu(s,x,y)\ dy dx \\ & \quad +  C\gamma \iint_{x,y} \left(\frac{\gamma - 1}\gamma |y|^\gamma + \frac 1\gamma |S(s,x)|^{\alpha\gamma}\right) \mu(s,x,y)\ dy dx \ .
  \end{align*}
The same argument follows provided that $\alpha\gamma<1$. More general combination of exponents could have been considered. We have chosen here a simple framework for the sake of clarity.
\end{remark}

We can use the Duhamel formula to represent the  inequality \eqref{eq:preDuhamel} as
\begin{multline*}
\iint_{x,y} |y| \mu(s,x,y) \ dxdy \leq Ce^{Cs} + e^{Cs} \iint_{x,y} |y| \mu_0(x,y)\ dx dy \\
+ C \int_{\tau = 0}^s e^{C(s-\tau)}  \int_x |S(\tau,x)|^\alpha \rho(\tau,x)\ dx d\tau\ .
\end{multline*}

Plugging that into \eqref{eq:estimate rho(y)} gives
\begin{equation*}
\|\rho\|_{L^p} \leq C_0 (t) + C \int_{s=0}^t \frac1{(t-s)^\lambda} \int_{\tau = 0}^s e^{C(s-\tau)}   \int_{x}  |S(\tau,x)|^\alpha \rho(\tau,x)\ dx d\tau ds \ .
\end{equation*}
We choose $p<3/2$ so that $\lambda = 3/p'<1$. Since $\alpha < 1$ we have $3< \frac{3}{\alpha}$ and we can choose $p$ sufficiently close to $3/2$ so that $ 3 < p' < \frac{3}{\alpha}$.
Then 
\[
 \int S(t,x)^\alpha \rho(t,x) dx \leq \norm{S^\alpha}{L^{p'}} \norm{\rho}{L^{p}} = \norm{S}{L^{\alpha p'}}^{\alpha} \norm{\rho}{L^p} \ .
\]
>From the mean field chemical equation \eqref{eq:mean field} $-\Delta S+S=\rho$ we deduce the following elliptic estimate. 
We have $\alpha p' < 3$, and $S=G*\rho$ 
where $G(x) \sim \frac{C}{|x|}$ for small $|x|$ (short range) and $G(x)$ decreases exponentially fast for large $|x|$ (long range). Thus we obtain
\[
 \norm{S}{L^{\alpha p'}} = \norm{G * \rho}{L^{\alpha p'}} \leq \norm{G}{L^{\alpha p'}} \norm{\rho}{L^1} \leq C M\ ,
\]
therefore
\[
 \int S(t,x)^\alpha \rho(t,x) dx \leq C M^\alpha \norm{\rho}{L^p}\ .
\]
We obtain
\begin{align*}
 \|\rho(t)\|_{L^p} &\leq C_0 (t) + C\int_{0}^{t} \frac1{(t-s)^\lambda} \int_{\tau = 0}^s e^{C(s-\tau)} \norm{\rho(\tau)}{L^p} d\tau ds \\
 & \leq C_{0}(t) + C \int_{0}^{t}  \norm{\rho(\tau)}{L^p} \int_{\tau}^{t} \frac{e^{C(s-\tau)}}{(t-s)^\lambda} ds d\tau\ , 
\end{align*}
Using the boundedness of $\int_{s = \tau}^t \frac1{(t-s)^\lambda}  e^{C(s-\tau)}\ ds$ with respect to $\tau$, we conclude thanks to a Gronwall estimate.

\end{proof}

\end{document}